\newcommand{\N}{\ensuremath{\mathbb{N}}}
\newcommand{\M}{\mathcal{M}}
\newcommand{\eps}{\varepsilon}
\newcommand{\R}{\ensuremath{\mathbb{R}}}
\newcommand{\C}{L^\infty}
\newcommand{\mj}{\setminus j}
\newcommand{\X}{\mathbb{X}}
\newcommand{\Y}{\mathbb{Y}}
\newcommand{\supp}{\textnormal{supp}}
\newcommand{\norm}[1]{\left\Vert #1\right\Vert}
\newcommand{\dx}{\,\mathrm{d}}
\newcommand{\tT}{\mathrm{T}}
\newcommand{\aprox}{\textnormal{aprox}}
\newcommand{\dual}{\mathcal{D}_\varepsilon^\varphi} 
\newcommand\curfolder{img/}
\newcommand\curwidth{0.24\textwidth}
\def\3{\ss}
\newcommand*\pFq[6][8]{
  \begingroup 
  \pFqmuskip=#1mu\relax
  \begingroup\lccode`\~=`\,
  \lowercase{\endgroup\let~}\pFqcomma
  {}_{#2}F_{#3}{\left(\genfrac..{0pt}{}{#4}{#5};#6\right)}%
  \endgroup
}
\newcommand*\pRegFq[6][8]{
  \begingroup 
  \pFqmuskip=#1mu\relax
  \begingroup\lccode`\~=`\,
  \lowercase{\endgroup\let~}\pFqcomma
  {}_{#2}\tilde{F}_{#3}{\left(\genfrac..{0pt}{}{#4}{#5};#6\right)}%
  \endgroup
}
\newcommand{\pFqcomma}{\mskip\pFqmuskip}
\newcommand*{\bigtimes}{\mathop{\raisebox{-.5ex}{\hbox{\Large{$\times$}}}}}
\DeclareMathOperator*{\diag}{diag}
\DeclareMathOperator{\F}{F}
\DeclareMathOperator{\G}{\mathcal{G}}
\DeclareMathOperator{\V}{\mathcal{V}}
\DeclareMathOperator{\E}{\mathcal{E}}
\DeclareMathOperator*{\argmin}{argmin}
\DeclareMathOperator{\KL}{KL}
\DeclareMathOperator{\TV}{TV}
\DeclareMathOperator{\UOT}{UOT}
\DeclareMathOperator{\UMOT}{UMOT}
\DeclareMathOperator{\MOT}{MOT}
\DeclareMathOperator{\dom}{dom}
\newtheorem{theorem}{Theorem}[section]
\newtheorem{lemma}[theorem]{Lemma}
\newtheorem{remark}[theorem]{Remark}
\newtheorem{definition}[theorem]{Definition}
\newtheorem{example}[theorem]{Example}
\newtheorem{corollary}[theorem]{Corollary}
\newtheorem{proposition}[theorem]{Proposition}
\begin{document}
\title{Unbalanced\\
 Multi-Marginal Optimal Transport}

\author{
Florian Beier\footnotemark[1]
	\and
Johannes von Lindheim\footnotemark[1]
	\and
Sebastian Neumayer\footnotemark[1]
	\and
Gabriele Steidl\footnotemark[1]
	}

\maketitle

\footnotetext[1]{Institute of Mathematics,
	Technische Universit\"at Berlin,
	Strasse des 17. Juni 136, 10587 Berlin, Germany,
	\{f.beier,vonlindheim\}@tu-berlin.de, \{neumayer,steidl\}@math.tu-berlin.de}
	
\begin{abstract}
Entropy regularized optimal transport and its multi-marginal generalization have 
attracted increasing attention in various applications, in particular due to efficient Sinkhorn-like algorithms for computing optimal transport plans. 
However, it is often desirable that the marginals of the optimal transport plan do not match the given measures exactly, which led to the introduction of the so-called unbalanced optimal transport.
Since unbalanced methods were not examined for the multi-marginal setting so far, we address this topic in the present paper.
More precisely, we introduce the unbalanced multi-marginal optimal transport problem and its dual, and show that a unique optimal transport plan exists under mild assumptions.
Furthermore, we generalize the Sinkhorn algorithm for regularized unbalanced optimal transport to the multi-marginal setting and prove its convergence.
For cost functions decoupling according to a tree, the iterates can be computed efficiently. 
At the end, we discuss three applications of our framework, namely two barycenter problems and a transfer operator approach, where we establish a relation between the barycenter problem and the multi-marginal optimal transport with an appropriate tree-structured cost function.
\end{abstract}

\noindent\textbf{Mathematics Subject Classification.} 49Q22, 49Q20, 49M29, 65D18, 37M10.

\noindent\textbf{Keywords.} Entropy regularization, multi-marginal optimal transport, Sinkhorn algorithm, unbalanced optimal transport, Wasserstein barycenters.

\section{Introduction} \label{sec:intro}

Over the last decades, optimal transport (OT) has attracted increasing attention in various applications, e.g., barycenter problems \cite{AC2011,BCCN15}, image matching \cite{WSBO13,ZYHT07} and machine learning \cite{ACB17,FZM15,KSKW15}.
As the OT minimization problem is numerically difficult to solve, a common approach is to add an entropy regularization term \cite{C2013}.
This enables us to approximately solve the problem using Sinkhorn iterations \cite{Sin64} by exploiting an explicit relation between the solutions to the corresponding primal and dual problems.
These iterations can be implemented in parallel on GPUs, which makes even large scale problems solvable within reasonable time.
Recently, a debiased version of the Sinkhorn divergence was investigated in \cite{FSVATP2018,NS20,RTC17}, which has the advantage that it characterizes the weak convergence of measures.
However, in many applications, the assumption that the marginal measures are matched exactly appears to be unreasonable.
To deal with this issue, unbalanced optimal transport (UOT) \cite{CPSV17, CPSV18,LMS18} was introduced. 
Here, the hard marginal constraints are replaced by penalizing the $\varphi$-divergences between the given measures and the corresponding marginals. 
By making minimal modifications, we can use Sinkhorn-like iterations and hence the computational complexity and scalability remain the same.
For the unbalanced case, the mentioned debiasing was discussed in \cite{SFVP19}.
For Gaussian distributions, the corresponding divergence even has a closed form \cite{JMPC20}.

So far we commented on OT between two measures.
For certain practical tasks such as matching for teams \cite{CE10}, 
particle tracking \cite{CK18}, and information fusion \cite{DC14,EHJK20}, 
it is useful  to compute transport plans between more than two marginal measures. 
This is done in the framework of multi-marginal optimal transport (MOT) \cite{Pass15}, 
where again entropy regularization is possible.
The problem was tackled numerically for Coulomb cost in \cite{BCN16}, 
and more general repulsive costs in \cite{CDD15,GKR20}.
Later, an efficient solution for tree-structured costs 
using Sinkhorn iterations was established in \cite{HRCK20}.
More recently, these results were extended to even more general cost functions in \cite{AB20}.

In this paper, we want to combine UOT and MOT by 
investigating unbalanced multi-marginal optimal transport (UMOT).
We can build upon the previous papers, but we will see that this has to be done carefully, since various generalizations that seem to be straightforward at the first glance appear to be not.
Inspired by the papers \cite{MG2019} and \cite{SFVP19}, we formulate Sinkhorn iterations for the UMOT problem. Our approach differs from that in \cite{SFVP19} as we cannot rely on the $1$-Lipschitz continuity of the $(c,\varepsilon)$-transform, which only holds in the two-marginal case.
Instead, we  establish uniform boundedness of the iterates and then exploit the compactness of the Sinkhorn operator.
As in the two-marginal case, we retain the excellent scalability of the algorithm for tree-structured costs.
Furthermore, we prove that these iterations are convergent under mild assumptions.

As one possible application, we discuss the computation of regularized UOT barycenters 
based on the regularized UMOT problem.
The OT barycenter problem was first introduced in \cite{AC2011} and then further studied, 
e.g., in \cite{BCCN15,cohen2021sliced,DDGU18,LSPC19} for the balanced setting.
As soon as entropic regularization is applied, we usually obtain a blurred barycenter.
One possible solution is to use the debiased Sinkhorn divergence instead \cite{JCG20, quang2020entropic}.
Similarly as in \cite{HRCK20} for the balanced case, we observe that solving an UMOT problem instead of minimizing a sum of UOT ``distance'' reduces the blur considerably.
We also validate this observation theoretically.
For this purpose, we show that for tree-structured costs UMOT transport plans are already determined by their two-dimensional marginals.
A complementary approach for computing barycenters in an unbalanced setting is based on the Hellinger--Kantorovich distance \cite{CP20,FMS21}.

Furthermore, we provide a numerical UMOT example with a path-structured cost function.
We observe that in comparison with a sequence of UOT problems with the same input measures, the UMOT approach has several advantages since the target measure from some optimal UOT plan is not necessarily equal to the source measure of the subsequent UOT plan.
For the (single) UMOT plan, this is impossible by construction.
Additionally, coupling the problems allows information to be shared between them.
Note that there is almost no computational overhead compared to solving the problems sequentially.

\textbf{Outline of the paper:} 
Section~\ref{sec:basics} contains the necessary preliminaries.
The regularized UMOT problem, in particular the existence and uniqueness of a solution as well as its dual problem are provided in Section~\ref{cha:mm}.
In Section~\ref{sec:sink}, we derive a Sinkhorn algorithm for solving the regularized UMOT problem and prove its convergence.
Then, in Section~\ref{sec:BaryInt}, we investigate the barycenter problem with respect to regularized UOT and establish a relation to the regularized UMOT problem with a tree-structured cost function, where the tree is simply star-shaped. Furthermore, we discuss an extension of these considerations to more general tree-structured cost functions.
Additionally, we outline an efficient implementation of the Sinkhorn algorithm for tree-structured costs.
Numerical examples, which illustrate our theoretical findings, are provided in Section~\ref{sec:numerical_examples}.
Finally, we draw conclusions in Section~\ref{sec:conc}.

\section{Preliminaries} \label{sec:basics}
Throughout this paper, let
$\X$ be a compact Polish space with associated Borel $\sigma$-algebra $\mathcal B(\X)$.
By $\M(\X)$, we denote the space of finite signed real-valued Borel measures, which can be identified via Riesz' representation theorem with the dual space of the continuous functions $C(\X)$ endowed with the norm $\| f\|_{C(\X)} \coloneqq \max_{x \in \X} |f(x)|$.
Denoting the associated dual pairing by $\langle \mu , f \rangle \coloneqq \int_\X f \, d\mu$,
the space $\M(\X)$ can be equipped with the weak*-topology, 
i.e., a sequence $(\mu_n)_{n \in \N} \subset \M(\X)$ converges \emph{weakly} to $\mu \in \M(\X)$, 
written $\mu_n \rightharpoonup \mu$, if
    \[\int_\X f \, d\mu_n \to \int_\X f \, d\mu \quad \text{for all } f \in C(\X).\]
The associated dual norm of $\mu \in \M(\X)$, also known as \emph{total variation}, is given by $\mathrm{TV} (\mu) = \sup_{\|f\|_{C(\X)}  \le 1} \langle \mu, f \rangle$.		
By $\mathcal M^+(\X)$, we denote
the subset of non-negative measures.
The \emph{support} of $\mu \in \M^+(\X)$ is defined as the closed set 
\begin{align*}
\supp(\mu) \coloneqq \bigl\{ x \in \X: &\,B \subset \X \text{ open, }x \in B \\ &\implies \mu(B) >0\bigr\}.
\end{align*}
For $\mu \in \mathcal M^+(\X)$ and $p \in [1,\infty]$, let $L^p(\X,\mu)$ 
be the Banach space (of equivalence classes) of real-valued measurable functions with norm $\|f\|_{L^p(\X,\mu)} < \infty.$

A measure $\nu \in \mathcal M(\X)$ is called \emph{absolutely continuous} with respect to $\mu$, and we write $\nu \ll \mu$, if for every $A \in \mathcal B(\X)$ with $\mu(A) = 0$ we have $\nu(A) = 0$.
For any $\mu, \nu \in \mathcal M^+(\X)$ with $\nu \ll \mu$, the \emph{Radon--Nikodym derivative} 
$$\sigma_\nu \coloneqq \frac{\dx \nu}{\dx \mu} \in L^1(\X,\mu)$$ 
exists and $\nu = \sigma_\nu \mu$.
Furthermore, $\mu, \nu \in \mathcal M(\X)$ are called \emph{mutually singular}, 
denoted by $\mu \perp \nu$, if there exist two disjoint sets $X_\mu, X_\nu \in \mathcal B(\X)$ such that $\X = X_\mu \dot \cup X_\nu$ and for every $A \in \mathcal B(\X)$ we have $\mu(A) = \mu(A \cap X_\mu)$ and $\nu(A) = \nu(A \cap X_\nu)$.
Given $\mu,\nu \in \mathcal M^+(\X)$, there exists a unique \emph{Lebesgue decomposition} $\mu = \sigma_\mu \nu + \mu^\perp$, 
where $\sigma_\mu \in L^1(\X,\nu)$ and $\mu^\perp \perp \nu$.
Let $\Y$ be another compact Polish space and $T\colon \X \to \Y$ be a measurable function, i.e., $T^{-1}(A) \in \mathcal{B}(\X)$ for all $A \in \mathcal{B}(\Y)$.
Then, the \emph{push-forward measure of $\mu$ by $T$} is defined as $T_\# \mu  \coloneqq \mu \circ T^{-1}$.

Let $V$ be a real Banach space with dual $V^*$ and dual pairing $\langle v,x \rangle = v(x)$, $v \in V^*, x \in V$.
For $F\colon V \rightarrow (-\infty,+\infty]$, the \emph{domain} of $F$ is given by
$ \mathrm{dom} F \coloneqq \{x \in V: F(x) \in \mathbb R \}$.
If $\mathrm{dom} F \not = \emptyset$, then $F$ is called \emph{proper}.
By $\Gamma_0(V)$ we denote the set of proper, convex, lower semi-continuous (lsc) functions mapping from $V$ to $(-\infty,+\infty]$.
The \emph{subdifferential} of $F\colon V \rightarrow (-\infty,+\infty]$ at a point $x_0 \in \mathrm{dom} F$
is defined as
\begin{align*}
&\partial F(x_0)\\
\coloneqq &\bigl\{q \in V^*: F(x) \ge F(x_0) + \langle q,x - x_0 \rangle \bigr\},
\end{align*}
and $\partial F(x_0) = \emptyset$ if $x_0 \not \in \mathrm{dom} F$. 
For a function $F\colon V^n \to (-\infty,\infty]$, $\partial_i F$ denotes the subdifferential of $F$ with respect to the $i$-th component.
The \emph{Fenchel conjugate}
$F^*\colon V^* \rightarrow (-\infty,+\infty]$ is given by
$$
F^*(q) = \sup_{x \in V} \{ \langle q,x\rangle  - F(x) \}.
$$

A non-negative function $\varphi \in \Gamma_0(\R)$ satisfying $\varphi(1) = 0$ and $\varphi|_{(-\infty,0)} = \infty$ 
is called \textit{entropy function} with \textit{recession constant} 
$\varphi^\prime_\infty = \lim_{x \to \infty} \varphi(x)/x$.
In this case, $\dom(\varphi^*) = (-\infty, \varphi^\prime_\infty]$.
For every $\mu,\nu \in \mathcal M^+(\X)$ with Lebesgue decomposition $\mu = \sigma_\mu \nu + \mu^\perp$, 
the \emph{$\varphi$-divergence} $D_\varphi \colon \mathcal M^+(\X) \times \mathcal M^+(\X) \to [0,\infty]$ is given 
in its primal and dual form by 
\begin{align}
D_\varphi(\mu,\nu) 
&= \int_{\X} \varphi \circ \sigma_\mu \dx \nu + \varphi^\prime_\infty\, \mu^\perp(\X)\\
&=
\sup_{f \in C(\X)} \langle \mu, f \rangle - \langle \nu, \varphi^* \circ f \rangle \label{eq:dual_div}
\end{align}
with the convention $0 \cdot \infty = 0$.
The mapping
$D_\varphi$ is jointly convex, weakly lsc and fulfills $D_\varphi(\mu,\nu) \ge 0$, see \cite[Cor.~2.9]{LMS18}.
Furthermore, we have for $t > 0$ that $D_{t\varphi} = t D_{\varphi}$.
We will use the following $\varphi$-divergences, see also~\cite{SFVP19}.

\begin{example}\label{ex:1}\hspace{.01cm}
\begin{itemize}
\item[i)]
Let $\varphi \coloneqq \iota_{\{1\}}$, where $\iota_{\mathcal S}$ denotes the \emph{indicator function} 
of the set ${\mathcal S}$,
i.e., $\iota_{\mathcal S}(x) = 0$ if $x \in {\mathcal S}$, and $\iota_{\mathcal S}(x) = +\infty$ otherwise.
Then $\varphi^*(q) = q$, $\varphi'_\infty = \infty$, and
\begin{equation} \label{eq:Dphi1}
D_{\varphi}(\mu,\nu)   = \left\{
\begin{array}{ll}
   0 &\mathrm{if} \; \mu=\nu,\\
	+\infty &\mathrm{otherwise}.
	\end{array}
	\right.
\end{equation}         
\item[ii)] For $\varphi\coloneqq \iota_{[0,\infty)}$, we get
$\varphi^*(q) = \iota_{(-\infty,0]}$, $\varphi'_\infty = 0$, and
$D_{\varphi}(\mu,\nu)   = 0$.
\item[iii)] Consider the \emph{Shannon entropy} with $\varphi(x) \coloneqq x\ln(x) - x +1$ and the agreement $0 \ln 0 = 0$.
Then, we have that $\varphi^*(q) = \exp(q) -1$, $\varphi'_\infty = \infty$, and
the $\varphi$-divergence is the
\emph{Kullback--Leibler divergence} 
$\mathrm{KL}\colon{\mathcal M^+}(\X) \times {\mathcal M^+}(\X) \rightarrow \mathbb [0, +\infty]$.
For $\mu,\nu\in {\mathcal M^+}(\X)$, if the Radon--Nikodym derivative 
$\sigma_\mu = \frac{\dx \mu}{\dx \nu}$
exists, then
\begin{equation} \label{KLdef}
\mathrm{KL} (\mu,\nu) \!\coloneqq \!\int_{\X} \ln(\sigma_\mu ) \, \dx \mu + \nu(\X) - \mu(\X),
\end{equation}
and otherwise, we set $\mathrm{KL} (\mu,\nu) \coloneqq + \infty$.
Note that the $\mathrm{KL}$ divergence is strictly convex with respect to the first variable. 
\item[iv)] For $\varphi(x) \coloneqq |x-1|$, we get
$\varphi^*(q) = \max(-1,q)$ if $ q \le 1$ and $\varphi^*(q) = +\infty$ otherwise, $\varphi'_\infty = 1$, and
$
D_{\varphi}(\mu,\nu)   = \mathrm{TV}(\mu - \nu)
$.
\end{itemize}
\end{example}

\section{Unbalanced Multi-Marginal Optimal Transport \label{cha:mm}}
Throughout this paper, we use the following abbreviations.
For compact Polish spaces $\X_i \neq \emptyset$, $i=1,\ldots, N$, and measures $\nu_i\in \M^+(\X_i)$, $i=1,\ldots, N$,
we set $\nu \coloneqq (\nu_1,\ldots,\nu_N)$ and
\begin{align}
    &\X \coloneqq \bigtimes_{i=1}^N \X_i, \qquad \X_{\mj} \coloneqq \bigtimes_{\scriptsize\substack{i=1 \\ i\neq j}}^N \X_i,\\  
    &\nu^\otimes \coloneqq \bigotimes_{i=1}^N \nu_i, \qquad 
    \nu_{\mj}^\otimes \coloneqq \bigotimes_{\scriptsize\substack{i=1 \\ i\neq j}}^N \nu_i.
\end{align}
Furthermore, for $p \in [1,\infty]$ and $f_i \in L^p(\X_i, \nu_i)$, $i=1,\ldots,N$, we write
$f \coloneqq  (f_1,\ldots,f_N)$ and
\begin{align*}
L^{p,\times}(\X,\nu) \coloneqq \bigtimes_{i=1}^N L^p(\X_i,\nu_i),\\
f^\oplus \coloneqq \bigoplus_{i=1}^N f_i, \quad
f_{\mj}^\oplus \coloneqq \bigoplus_{\scriptsize\substack{i=1 \\ i\neq j}}^N f_i,
\end{align*}
where the product space $L^{p,\times}(\X,\nu)$ is equipped with the $L^p$ norm of the components.
For example, if $\X=\X_1\times \X_2\times \X_3$, then for $f=(f_1, f_2, f_3)\in L^{p,\times}(\X,\nu)$ we have
\begin{align*}
f^\oplus_{\setminus 2}(x_1, x_2, x_3) = f_1(x_1) + f_3(x_3).
\end{align*}
If the domains and measures are clear from the context, we abbreviate the associated norms by $\Vert \cdot \Vert_p$, $p \in [1,\infty]$.

Given $\mu_i\in \M^+(\X_i)$, $i=1,\ldots, N$,
the measures $\gamma_i\in \M^+(\X_i)$, $i=1,\ldots, N$, are called \emph{reference measures} for $\mu$ if
\[
\KL(\mu^\otimes, \gamma^\otimes)< \infty.
\]
\begin{definition}
    Let $\varepsilon>0$. Given a non-negative cost $c \in C (\X)$, fully supported measures $\mu_i \in \mathcal{M}^+(\X_i)$, $i=1,\ldots,N$, with reference measures $\gamma_i \in \mathcal{M}^+(\X_i)$, and entropy functions $\varphi_i \in \Gamma_0(\R)$, $i=1,\ldots,N$, the associated 
    \emph{regularized unbalanced multi-marginal optimal transport problem $(\UMOT)$} reads
    \begin{align}\label{eq:reg_mm_ub_OT}
        \UMOT_\eps(\mu) \coloneqq &\!\!\inf_{\pi \in \M^+(\X)} \int_{\X}c\dx \pi
        + \eps \KL\bigl(\pi, \gamma^\otimes \bigr) \nonumber \\
        &+ \sum_{i=1}^N D_{\varphi_i}(P_{\X_i} \strut{}_\# \pi, \mu_i),
\end{align}
where $P_{\X_i} \strut{}_\# \pi$ is the $i$-th marginal of $\pi$.
\end{definition}
Note that the full support condition is no real restriction as we can choose $\X_i = \supp(\mu_i)$.
Furthermore, we can implicitly incorporate weights for the marginal penalty terms in \eqref{eq:reg_mm_ub_OT} by rescaling the entropy functions $\varphi_i$.

\begin{remark}[Regularization and reference measures]\label{rem:kl}
Typical choices for the reference measures $\gamma_i$ are
\begin{itemize}
\item[i)] $\gamma_i = \mu_i$ then $\KL(\mu^\otimes, \gamma^\otimes) = 0$ and we regularize in $\UMOT_\eps$
by $\KL(\pi, \mu^\otimes)$. 
\item[ii)] $\gamma_i$ the Lebesgue measure on  $\X_i \subset \R^{d_i}$.
Then $\KL(\mu^\otimes, \gamma^\otimes) < \infty$  is equivalent to $\mu_i$ 
having a density in a so-called Orlicz space, see \cite{CLMW19,NS20} for details.
Furthermore, the regularizer in $\UMOT_\eps$ is the entropy $\KL(\pi, \gamma^\otimes) = E (\pi)$ for continuous measures.
\item[iii)] $\gamma_i$ the counting measure if the $\X_i$ are finite.
Here, $\KL(\mu^\otimes, \gamma^\otimes) < \infty$ is equivalent to $\mu$ being positive.
Then, the regularizer is the entropy for discrete measures 
$\KL(\pi, \lambda^\otimes)= E(\pi)$.
\end{itemize}
\end{remark}
Definition \eqref{eq:reg_mm_ub_OT} includes the following special cases:
\begin{itemize}
\item If $\varphi_i = \iota_{\{1\}}$ for all $i=1,\ldots,N$, then we have by Example \ref{ex:1} i)
the regularized multi-marginal optimal transport ($\mathrm{MOT}_{\eps}$)
with hard constraints for the marginals. For $\eps =0$, we deal with the plain multi-marginal optimal transport (MOT) formulation.
\item If $N=2$, then we are concerned with regularized unbalanced optimal transport ($\mathrm{UOT}_{\eps}$).
If $\varphi_1 = \varphi_2 = \iota_{\{1\}}$, we get regularized optimal transport ($\mathrm{OT}_{\eps}$), and if $\eps = 0$,
we arrive at the usual optimal transport (OT) formulation.
\end{itemize}

Regarding existence and uniqueness of minimizers for $\UMOT_\eps$, we have the following proposition.
\begin{proposition}\label{prop:ub_existence}
The $\UMOT_\eps$ problem \eqref{eq:reg_mm_ub_OT} admits a unique optimal plan.
\end{proposition}
\begin{proof}
The problem is feasible due to $\KL(\mu^\otimes, \gamma^\otimes) < \infty$.
Existence follows since $\KL(\cdot,\gamma^\otimes)$ and hence the whole functional \eqref{eq:reg_mm_ub_OT} is coercive, and since all involved terms are lsc.
For the uniqueness note that all terms in \eqref{eq:reg_mm_ub_OT} are convex in $\pi$ and that KL is moreover strictly convex in its first argument.
\end{proof}

For applications, the dual formulation of $\UMOT_\eps$ is important.

\begin{proposition}\label{prop:duality_reg_mm_ub_OT}
The $\UMOT_\eps$ problem         \eqref{eq:reg_mm_ub_OT} admits the dual representation
    \begin{align}
  \!\!\!\!\!\!\! \UMOT_\eps&(\mu) =  \!\!\!\!\sup_{f\in L^{\infty,\times}(\X,\gamma)} \!\!\!
  \dual(f) + \eps \gamma^\otimes(\X), \label{eq:dual_problem_reg_mm_ub_OT}
        \end{align}
    where
    \begin{align}
    \dual(f) \coloneqq
    &- \sum_{i=1}^N \int_{\X_i}  \varphi_i^*\circ(-f_i) \dx \mu_i  \nonumber\\
    &-  \eps \int_{\X} \exp\Bigl(\frac{ f^\oplus - c}{\eps}\Bigr)\dx \gamma^\otimes.
  \label{eq:dual_problem_reg_mm_ub_OT_functional}
    \end{align}
    
    The optimal plan $\hat \pi \in \M^+(\X)$ for the primal problem \eqref{eq:reg_mm_ub_OT} 
		is related to any tuple of optimal dual potentials $\hat f \in L^{\infty,\times}(\X,\gamma)$ by
  \begin{equation} \label{eq:pd} 
		\hat \pi= \exp\Bigl(\frac{\hat f^\oplus - c}{\eps}\Bigr)\gamma^\otimes.
	\end{equation}
Furthermore, any pair of optimal dual potentials $\hat f, \hat g \in L^{\infty,\times}(\X,\gamma)$ satisfies $\hat f^\oplus = \hat g^\oplus$.
Moreover, if $N-1$ of the $\varphi_i^*$, $i =1,\dotsc,N$, are strictly convex, then it holds $\hat{f} = \hat{g}$.
\end{proposition}

\begin{proof}
  First, we set $V \coloneqq L^{\infty,\times}(\X,\gamma)$ and $W \coloneqq \C(\X, \gamma^\otimes)$,
    and define $A\colon V \to W$, $F\in \Gamma_0(V)$ and $G\in \Gamma_0(W)$ via
        \begin{align*}
            &A(f) \coloneqq f^\oplus,\\
            &F(f) \coloneqq \sum_{i=1}^N \int_{\X_i}  \varphi_i^*\circ f_i \dx \mu_i,\\
            &G(f) \coloneqq \eps \int_{\X} \exp\Bigl(\frac{f  - c}{\eps}\Bigr) - 1 \dx \gamma^\otimes.
        \end{align*}
    Note that $V^*$ and $W^*$ are the respective dual spaces of finitely additive signed measures that are absolutely continuous with respect to $\gamma$ and $\gamma^\otimes$, respectively.
    From \cite[Thm.~4]{Rock68} with the choice $g\colon \X\times\R\to \R$,
    \begin{align*}
    g(x, p) &\coloneqq \eps \Big( \exp \Big( \frac{p-c(x)}{\eps} \Big)-1 \Big)
    \end{align*}
    such that $g^*(x, q)=c(x)q + \eps (\ln(q)q-q+1)$ if $q\geq 0$ with the convention $0\ln 0=0$ and $g^*(x, q)=\infty$ otherwise,
    we get that the Fenchel conjugate $G^* \in \Gamma_0(W^*)$
		is given by
        \[G^*(\pi) = \int_{\X} c \sigma_\pi \dx \gamma^\otimes + \eps \KL\bigl(\sigma_\pi \gamma^\otimes, \gamma^\otimes\bigr)\]
        if there exists non-negative $\sigma_\pi \in L^1(\X,\gamma^\otimes)$ with $\langle \pi, f \rangle = \int_\X f \sigma_\pi \dx \gamma^\otimes$ for all $f \in W$ and  $G^*(\pi) = \infty$ for all other $\pi\in W^*$.
		Using the definition of the Fenchel conjugate and \eqref{eq:dual_div}, the function $F^* \circ A^*\in \Gamma_0(W^*)$ can be expressed for any such $\pi$ as
        \begin{align*}
            &F^*(A^*\pi)\\
            = &\!\!\!\!\sup_{f\in L^{\infty,\times}(\X,\gamma)}\!\langle A^* \pi, f \rangle - \!\sum_{i=1}^N \int_{\X_i}  \varphi_i^*\circ f_i \dx \mu_i\\
            = &\!\!\!\!\sup_{f\in L^{\infty,\times}(\X,\gamma)} \langle \pi, f^\oplus \rangle - \sum_{i=1}^N \int_{\X_i}  \varphi_i^*\circ f_i \dx \mu_i\\
            = &\!\!\!\!\sup_{f\in L^{\infty,\times}(\X,\gamma)} \sum_{i=1}^N \int_{\X_i} \!f_i \dx P_{\X_i} \strut{}_\# \pi  - \!\!\int_{\X_i}  \!\varphi_i^*\circ f_i \dx \mu_i\\
            = &\sum_{i=1}^N D_{\varphi_i}(P_{\X_i} \strut{}_\# \pi, \mu_i).
        \end{align*}
Now, we obtain the assertion by applying the Fenchel--Rockafellar duality  relation 
\begin{align} \label{primal-dual}
&\inf_{w \in W^*} \bigl\{ F^*(A^* w) + G^*(w)\bigr\}\\
=
&\sup_{x \in V}\bigl\{ - F(-x) - G(Ax)\bigr\},\notag
\end{align}
see \cite[Thm.~4.1, p.~61]{ET1999}.
Due to the definition of $G^*$, it suffices to consider elements from $W^*$ that can be identified with elements in $L^1(\X,\gamma^\otimes)$.
Hence, the problem \eqref{primal-dual} coincides with \eqref{eq:reg_mm_ub_OT}.

The second assertion follows using the optimality conditions. More precisely, let $\hat{\pi}$ and $\hat{f}$ be optimal. By \cite[Chap.~3,~Prop.~4.1]{ET1999}, this yields $A \hat f \in \partial G^* (\hat \pi)$ which is equivalent to $\hat{\pi} \in \partial G(A\hat{f})$.
Since $G$ is G\^ateaux-differentiable with $\nabla G(f) = \exp \bigl(\frac{f - c}{\eps}\bigr) \gamma^\otimes$, we obtain
\[
\hat{\pi} = \exp \biggl(\frac{\hat{f}^\oplus - c}{\eps}\biggr) \gamma^\otimes.
\]

Finally, let $\hat{f},\hat{g} \in L^{\infty,\times}(\X,\gamma)$ be two optimal dual potentials.
The second summand is concave in $f$ and strictly concave in $\hat f^\oplus$.
The first summand in \eqref{eq:dual_problem_reg_mm_ub_OT_functional} is concave.
Moreover, if $N-1$ of the $\varphi_i$, $i=1,\dotsc,N$, are strictly convex, then \eqref{eq:dual_problem_reg_mm_ub_OT_functional} is strictly concave.
Hence, both claims follow.
\end{proof}

\section{Sinkhorn Algorithm for Solving the Dual Problem} \label{sec:sink}
In this section, we  derive an algorithm for solving 
the dual problem \eqref{eq:dual_problem_reg_mm_ub_OT}. 
We prove its convergence under the assumption that for all $i = 1, \ldots, N$, we have
$\ln(\sigma_{\mu_i}) \in L^\infty(\X_i,\gamma_i)$,
where $\sigma_{\mu_i}$ is the Radon-Nikodym derivative of $\mu_i$ with respect to the reference measures $\gamma_i$, and some mild assumptions on the entropy functions $\varphi_i$.

First, we introduce two operators that appear in the optimality conditions of the dual problem, 
namely the $(c,\eps)$-transform and the anisotropic proximity operator.
\begin{definition}
For $j = 1,\dotsc,N$, the \emph{$j$-th $(c,\eps)$-transform} $\F^{(c,\eps,j)}\colon L^{\infty,\times}(\X,\gamma)\to L^\infty(\X_j,\gamma_j)$ is given by
\begin{align} \label{Fcej}
    &\F^{(c,\eps,j)}(f) = f^{(c,\eps,j)}
    \\
    &\coloneqq  \eps \ln (\sigma_{\mu_j}) - \eps \ln \biggl(\int_{\X_{\mj}} \!\!\!\exp\biggl(\frac{f_{\mj}^\oplus - c}{\eps}\biggr) \dx \gamma_{\mj}^\otimes\biggr).\notag
\end{align}
\end{definition}
This transform was discussed in relation with $\MOT_\varepsilon$ in \cite{MG2019}, where the following two properties were shown.
\begin{lemma}\label{prop:properties_entropic-c-trafo}
    Let $\ln(\sigma_{\mu_i}) \in L^\infty(\X_i,\gamma_i)$, $i = 1,\dotsc,N$, and $f \in L^{\infty,\times}(\X,\gamma)$.
    Then, the following holds:
	\begin{itemize}
	    \item[i)] For every $j = 1,\dotsc,N$ it holds 
			\begin{align*}
			&\Vert f^{(c,\eps,j)} \!+ \lambda_{f,j} \Vert_{\infty} 
			\!\leq\! \Vert c \Vert_\infty 
			\!+ \eps \Vert \ln (\sigma_{\mu_j}) \Vert_{\infty},
			\end{align*}
			where
	    \[
        \lambda_{f,j} \coloneqq \eps \ln \biggl(\int_{\X_{\mj}} \exp\biggl(\frac{f_{\mj}^\oplus}{\eps}\biggr) \dx \gamma_{\mj}^\otimes\biggr).
        \]
    In particular, we get that $f^{(c,\eps,j)}$ has bounded oscillation
	\[
        \sup_{y \in \X_j} f^{(c,\eps,j)}(y)
        - \inf_{y \in \X_j} f^{(c,\eps,j)}(y) < \infty.
    \]		
    \item[ii)] The nonlinear and continuous operator $\F^{(c,\eps,j)}\colon L^{\infty,\times}(\X,\gamma)\to L^p(\X_j,\gamma_j)$ is compact for $p \in [1, \infty)$, i.e., it maps bounded sets to relatively compact sets.
    	\end{itemize}
\end{lemma}

\begin{definition}
For any entropy function $\varphi \in \Gamma_0(\R)$ and $\eps > 0$, 
the \emph{anisotropic proximity operator} $\aprox_{\varphi^*}^{\eps}\colon \R \to \R$ is given by
\begin{equation}\label{eq:aprox_op_def}
    \aprox_{\varphi^*}^{\eps}(p) \coloneqq \argmin_{q \in \R} \bigl\{\eps e^{\frac{p-q}{\eps}} + \varphi^*(q)\bigr\}.
\end{equation}
\end{definition}
\begin{remark}
This operator is indeed well-defined.
Furthermore, it is $1$-Lipschitz, and can be given in analytic form for various conjugate entropy functions, see \cite{SFVP19}.
\end{remark}

\begin{example}\label{ex:2}
Let us have a closer look at the functions from Example~\ref{ex:1}.
\begin{itemize}
\item[i)]
For $\varphi = \iota_{\{1\}}$ it holds $\aprox_{\varphi^*}^{\eps}(p) = p$.
\item[ii)] For $\varphi=\iota_{[0,\infty)}$, we get 
$\aprox_{\varphi^*}^{\eps}(p) = 0$.
\item[iii)] For $\varphi(x) = t (x \ln(x) -x+1)$ corresponding to the Kullback--Leibler divergence, we have
\begin{equation} \label{aniso_kl}
 \aprox_{\varphi^*}^{\eps}(p) = \frac{t}{t+\eps} \, p.
\end{equation}
\item[iv)] For  $\varphi(x) = t |x-1|$ belonging to the $\TV$ distance, it holds
\begin{equation} \label{aniso_tv}
 \aprox_{\varphi^*}^{\eps}(p) = 
\left\{
\begin{array}{rl}
-t,& p < -t,\\
p, & p \in [-t,t],\\
t, & p > t.
\end{array}
\right.
\end{equation}
\end{itemize}
\end{example}

\begin{definition}
The $(c,\eps)$-transform and the anisotropic proximity operator
are concatenated to the \emph{$j$-th Sinkhorn mapping}
\[
S^{(c,\eps,\varphi,j)} \colon L^{\infty,\times}(\X,\gamma) \to \C(\X_j,\gamma_j)
\]
defined as $S^{(c,\eps,\varphi,j)}(f) \coloneqq f^{(c,\eps,\varphi,j)}$ with
\[f^{(c,\eps,\varphi,j)} \coloneqq - \aprox_{\varphi_j^*}^\eps \bigl( -f^{(c,\eps,j)} \bigr),\]
where the operator \smash{$\aprox_{\varphi_j^*}^\eps$} is applied pointwise.
\end{definition}

Now, we derive the maximizer of $\dual$ defined in \eqref{eq:dual_problem_reg_mm_ub_OT_functional}.

\begin{proposition}\label{prop:sinkhorn_estimate}
    Let $\ln(\sigma_{\mu_i}) \in L^\infty(\X_i,\gamma_i)$, $i=1,\ldots,N$.
 Then, it holds for all $j = 1,\dotsc,N$ and $f \in L^{\infty,\times}(\X,\gamma)$ that
    \begin{align*}
        &\dual(f) \leq{} \dual(\dotsc,f_{j-1},f^{(c,\eps,\varphi,j)},f_{j+1},\dotsc)
    \end{align*}
    with equality if and only if $f_j = f^{(c,\eps,\varphi,j)}$.
	    Furthermore, $f$ is a maximizer of $\dual$ if and only if 
		$f_j = f^{(c,\eps,\varphi,j)}$ for all $j=1,\ldots,N$.
\end{proposition}

\begin{algorithm*}[t]
	\begin{algorithmic}
	    \State \textbf{Input:} $f^{(0)} \in L^{\infty,\times}(\X,\gamma)$ with $\dual(f^{(0)}) > - \infty$
			\State \textbf{Iterations:}
		\For{$r = 0,1,\ldots$ }
		   \For{$j = 1,2,\ldots,N$}
			    \State 
					$f_i^{(rN+j)} =
					\left\{         
					\begin{array}{ll}
					\left( f^{(rN+j-1)} \right)^{(c,\eps,\varphi,j)} & i=j\\
					f_i^{(rN+j-1)} &i\not =j
					\end{array}
					\right.$
				\EndFor	
			\EndFor
			\caption{Sinkhorn Iterations for $\mathrm{UMOT}_\eps$}
		\label{def:sinkhorn}
	\end{algorithmic}
\end{algorithm*}

\begin{proof}
We fix $j \in \{1,\dotsc,N\}$ and rewrite 
\begin{align*}
    \dual(f)
    = - \!\sum_{i \neq j} \bigl \langle \mu_i, \varphi_i^*(-f_i) \bigr \rangle - \!\! \int_{\X_j}\!\! \varphi_j^*(-f_j) \sigma_{\mu_j}\\
    + \eps \exp\Bigl(\frac{f_j}{\eps}\Bigr) \int_{\X_{\mj}} \!\!\exp\biggl(\frac{f_{\mj}^\oplus - c}{\eps}\biggr) \dx \gamma_{\mj}^\otimes \dx \gamma_j.
\end{align*}
By rearranging the definition \eqref{Fcej} of the $(c,\eps)$-transform, we get that
\begin{align*}
    &\exp\biggl(\frac{f^{(c,\eps,j)}}{-\eps} \biggr) = \frac{1}{\sigma_{\mu_j}} \int_{\X_{\mj}} \!\!\!\exp\biggl(\frac{f_{\mj}^\oplus - c}{\eps}\biggr) \dx \gamma_{\mj}^\otimes.
\end{align*}
Plugging this into the equation above, we get
\begin{align*}
    \dual(f) = &- \sum_{i \neq j} \langle \mu_i, \varphi_i^*(-f_i) \rangle - \int_{\X_j} \varphi_j^*(-f_j)\\ &+ \eps \exp\Bigl(\frac{f_j - f^{(c,\eps,j)}}{\eps}\Bigr) \dx \mu_j.
\end{align*}
The integrand on the right-hand side has the form of the functional in \eqref{eq:aprox_op_def}, so that
we obtain
\begin{align*}
    &\dual(f) \leq \dual(\dotsc,f_{j-1},f^{(c,\eps,\varphi,j)},f_{j+1},\dotsc).
\end{align*}
Since the minimization problem \eqref{eq:aprox_op_def} admits a unique solution,
strict inequality holds if and only if $f_j \neq f^{(c,\eps,\varphi,j)}$.

By the first part of the proof the relation
$f_j = f^{(c,\eps,\varphi,j)}$ is equivalent to 
$0 \in \partial_j \dual(f)$, and the last statement follows if we show that
\[\partial \dual(f) = \bigtimes_{i=1}^N \partial_i \dual(f).\]
Using $F_i\colon \C(\X_i, \gamma_i) \to [-\infty,\infty)$ as well as $G\colon L^{\infty,\times}(\X,\gamma) \to \R$ given by
\begin{align*}
    &F_i(f) = -\int_{\X_i} \varphi_i^* \circ (-f) \dx \mu_i, \quad i = 1,\dotsc,N,\\
    &G(f) = -\eps \int_{\X} \exp\Bigl(\frac{f  - c}{\eps}\Bigr) \dx \gamma^\otimes,
\end{align*}
respectively, we can decompose $\dual$ as $\dual = F^\oplus + G$.
As $G$ is G\^ateaux-differentiable, we obtain that $\partial G = \bigtimes_{j=1}^N \partial_j G$.
By continuity of $G$ and since 
$0\in \dom \varphi^*$,
it holds $0\in \dom G \cap \dom F^\oplus$,
such that the subdifferentials are additive by \cite[Ch.~1, Prop.~5.6]{ET1999}.
Thus, using $\partial F^\oplus = \bigtimes_{i=1}^N \partial F_i$, we obtain
\begin{align*}
    \partial \dual  &= 
		\partial G + \partial F^\oplus = 
		\bigtimes_{i=1}^N \partial_i G + \bigtimes_{i=1}^N \partial_i F^\oplus\\
		&=\bigtimes_{i=1}^N \partial_i \bigl(G + F^\oplus\bigr)
		= \bigtimes_{i=1}^N \partial_i \dual.
\end{align*}
This concludes the proof.
\end{proof}

Inspired by the Sinkhorn iterations for $\mathrm{MOT}_\eps$ in \cite{MG2019} 
and $\mathrm{UOT}_\eps$ in \cite{SFVP19}, we propose Algorithm~\ref{def:sinkhorn} for solving $\mathrm{UMOT}_\eps$ in its dual form \eqref{eq:dual_problem_reg_mm_ub_OT}.
By Proposition~\ref{prop:sinkhorn_estimate} every fixed point of the sequence 
$(f^{(rN)})_{r \in \N}$ generated by Algorithm~\ref{def:sinkhorn}
is a solution of \eqref{eq:dual_problem_reg_mm_ub_OT}.
\begin{remark}
It holds $\varphi^*(0)=0$.
Hence, we can choose $f^{(0)}=0$ as an initialization with $\dual(f^{(0)}) > -\infty$.
\end{remark}

Next, we want to show that the sequence converges. Note that in \cite[Thm.~4.7]{MG2019} convergence of the (rescaled) Sinkhorn algorithm was 
shown by exploiting the property $\dual(f_1,\dotsc,f_N) = \dual(f_1 + \lambda_1,\dotsc,f_N + \lambda_N)$ 
for all $\lambda_1,\dotsc,\lambda_N \in \mathbb R$ with $\sum_{i=1}^N \lambda_i = 0$, which holds 
exclusively in the balanced case where $\varphi_i^*(q) = q$.
Hence, significant modifications  of the proof are necessary.
Albeit taking several ideas from \cite{SFVP19}, our approach differs as we cannot rely on the $1$-Lipschitz continuity of the $(c,\eps)$-transform, which only holds for $N=2$.
Instead, we exploit the compactness of the Sinkhorn operator as in \cite{MG2019}, for which we need to establish uniform boundedness of the iterates.
To this end, we need the following two lemmata.

\begin{lemma}\label{thm:D_eps_coercive}
Let $f^{(n)} \in L^{\infty, \times} (\X, \gamma)$, $n \in \N$,
satisfy $\Vert (f^{(n)})^\oplus \Vert_{\infty} \overset{n\to\infty}{\longrightarrow} \infty$ and have uniformly bounded oscillations (see Lemma~\ref{prop:properties_entropic-c-trafo}).
Then, it holds  $\dual(f^{(n)}) \rightarrow -\infty$.
\end{lemma}

\begin{proof}
    Since the entropy functions $\varphi_i$, $i=1,\ldots,N$, satisfy $\varphi_i(1) = 0$, we have $\varphi_i^*(x) \geq x$ for all $x \in \R$.
    Hence, we can estimate
    \begin{align*} - \sum_{i=1}^N \bigl \langle \mu_i, \varphi_i^*(-f^{(n)}_i) \bigr \rangle &\leq \langle \mu^\otimes, (f^{(n)})^\oplus \rangle .
    \end{align*}
    Since the $\mu_i$ are absolutely continuous with respect to $\gamma_i$ with density $\sigma_{\mu_i}$, we obtain
    \begin{align*}
    &\dual(f^{(n)}) \leq \biggl \langle \mu^\otimes, (f^{(n)})^\oplus\\ 
    &-\eps \exp\biggl(\frac{ (f^{(n)})^\oplus - c - \eps\ln\bigl(\prod_{i=1}^N \sigma_{\mu_i}\bigr)}{\eps}\biggr)\biggr\rangle.
    \end{align*}
    Clearly, $(f^{(n)})^\oplus$ has uniformly bounded oscillation.
    Hence, for $\Vert (f^{(n)})^\oplus \Vert_{\infty} \to \infty$ the integrand diverges to $-\infty$ on a set of positive measure, which yields the assertion.
\end{proof}

\begin{lemma}\label{lem:sej_lem5}
Let $\ln(\sigma_{\mu_i}) \in L^\infty(\X_i,\gamma_i)$, $i = 1,\dotsc,N$,
and $\dual(f^{(0)}) > -\infty$. For the Sinkhorn sequence $(f^{(n)})_{n\in \N} \subset L^{\infty,\times}(\X,\gamma)$
generated by Algorithm~\ref{def:sinkhorn}, there exists a constant $M>0$ and a sequence  \smash{$(\lambda^{(n)})_{n\in \N}$,  \smash{$\lambda^{(n)} \in \R^N$},}
with \smash{$\sum_{i=1}^N \lambda_i^{(n)} = 0$} such that
\[\bigl \Vert f_i^{(n)} + \lambda_i^{(n)} \bigr \Vert_{ \infty} < M.\]
\end{lemma}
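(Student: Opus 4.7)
The plan is to combine three ingredients: a uniform oscillation bound on each Sinkhorn-updated component via Lemma~\ref{prop:properties_entropic-c-trafo} i), the monotonicity of $D_\eps^\varphi$ along the iterates from Proposition~\ref{prop:sinkhorn_estimate} together with the coercivity statement of Lemma~\ref{thm:D_eps_coercive}, and a product-measure identity that lets us absorb all unbounded growth into a single component via a zero-sum translation.

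First, Lemma~\ref{prop:properties_entropic-c-trafo} i) implies
\[
\osc(f^{(c,\eps,j)}) \leq 2 \bigl(\|c\|_{L^\infty(\X,\gamma^\otimes)} + \eps \|\ln (\sigma_{\mu_j})\|_{L^\infty(\X_j,\gamma_j)}\bigr),
\]
and since $\aprox_{\varphi_j^*}^\eps$ is $1$-Lipschitz and negation preserves oscillation, the Sinkhorn map $S^{(c,\eps,\varphi,j)}$ inherits this bound. Consequently, for $n \geq N$ every component $f_i^{(n)}$ has been updated at least once, so $\osc(f_i^{(n)}) \leq K$ for some uniform constant $K$, and hence $(f^{(n)})^\oplus$ has oscillation at most $NK$.

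Next, Proposition~\ref{prop:sinkhorn_estimate} gives $D_\eps^\varphi(f^{(n+1)}) \geq D_\eps^\varphi(f^{(n)})$, so the sequence $\bigl(D_\eps^\varphi(f^{(n)})\bigr)_{n \in \N}$ is non-decreasing and bounded below by $D_\eps^\varphi(f^{(0)}) > -\infty$ (which holds since $f^{(0)} \in L^{\infty,\times}(\X)$; otherwise one may simply restart from $f^{(0)} \equiv 0$). Were $\|(f^{(n)})^\oplus\|_{L^\infty(\X,\gamma^\otimes)}$ to diverge along a subsequence, Lemma~\ref{thm:D_eps_coercive} would force $D_\eps^\varphi(f^{(n)}) \to -\infty$, contradicting this lower bound. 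Hence there exists $C > 0$ with $\|(f^{(n)})^\oplus\|_{L^\infty(\X,\gamma^\otimes)} \leq C$ for every $n \geq N$.

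To finish, let $m_i^{(n)}$ denote the essential infimum of $f_i^{(n)}$ with respect to $\gamma_i$ and set $g_i^{(n)} \coloneqq f_i^{(n)} - m_i^{(n)}$, so that $0 \leq g_i^{(n)} \leq K$ almost everywhere. The product structure of $\gamma^\otimes$, combined with the full-support hypothesis on $\mu_i$ (which together with $\ln(\sigma_{\mu_i}) \in L^\infty(\X_i,\gamma_i)$ forces $\supp \gamma_i = \X_i$), yields that $S^{(n)} \coloneqq \sum_i m_i^{(n)}$ is precisely the essential infimum of $(f^{(n)})^\oplus$ with respect to $\gamma^\otimes$. In particular, $|S^{(n)}| \leq \|(f^{(n)})^\oplus\|_{L^\infty(\X,\gamma^\otimes)} \leq C$. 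Setting
\[
\lambda_i^{(n)} \coloneqq -m_i^{(n)} \quad \text{for } i = 1,\ldots,N-1, \qquad \lambda_N^{(n)} \coloneqq \sum_{i=1}^{N-1} m_i^{(n)},
\]
we obtain $\sum_{i=1}^{N} \lambda_i^{(n)} = 0$, while $f_i^{(n)} + \lambda_i^{(n)} = g_i^{(n)}$ is bounded by $K$ for $i < N$ and $f_N^{(n)} + \lambda_N^{(n)} = g_N^{(n)} + S^{(n)}$ is bounded by $K + C$. For the finitely many iterates $n < N$ we simply set $\lambda^{(n)} = 0$ and enlarge $M$ to also dominate $\max_{n < N}\|f^{(n)}\|_{L^{\infty,\times}(\X)}$, which is finite because each Sinkhorn step preserves membership in $L^{\infty,\times}(\X)$ when started from $f^{(0)} \in L^{\infty,\times}(\X)$. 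The main obstacle is the identity relating the essential infimum of $(f^{(n)})^\oplus$ to the sum of the $m_i^{(n)}$, which is where the full-support conditions on the $\gamma_i$ are used; granted this identity, the scalar bound on $(f^{(n)})^\oplus$ translates directly into a componentwise bound modulo the zero-sum shift.
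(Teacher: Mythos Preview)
Your proof is correct and follows the same three-step skeleton as the paper: (1) a uniform oscillation bound on the updated components via Lemma~\ref{prop:properties_entropic-c-trafo}~i) and the $1$-Lipschitz property of $\aprox$, (2) the monotonicity of $D_\eps^\varphi$ along the iterates together with Lemma~\ref{thm:D_eps_coercive} to bound $\|(f^{(n)})^\oplus\|_{L^\infty}$, and (3) a zero-sum shift that absorbs all growth into the last component, which is then controlled by the bound on the direct sum.

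The only genuine difference is in how the shifts $\lambda_i^{(n)}$ are chosen. The paper ties them to the specific constants $\lambda_{f,i}$ appearing in Lemma~\ref{prop:properties_entropic-c-trafo}~i), setting $\lambda_i^{(n)} \coloneqq \aprox_{\varphi_i^*}^\eps(-\lambda_{f^{(\cdot)},i})$ for $i<N$ and using the $1$-Lipschitz property of $\aprox$ pointwise to transfer the bound on $f^{(c,\eps,i)}+\lambda_{f,i}$ to one on $f^{(c,\eps,\varphi,i)}+\lambda_i^{(n)}$. You instead take $\lambda_i^{(n)} = -\operatorname{ess\,inf} f_i^{(n)}$ for $i<N$, relying only on the oscillation bound and the product-measure identity $\operatorname{ess\,inf}_{\gamma^\otimes}(f^{(n)})^\oplus = \sum_i \operatorname{ess\,inf}_{\gamma_i} f_i^{(n)}$. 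Your route is a bit more elementary in that it does not track the internal constants of the $(c,\eps)$-transform, and the essential-infimum identity you flag as the ``main obstacle'' is indeed straightforward for product measures. Both constructions yield the same final estimate $M = M_2 + (N-1)M_1$ (in the paper's notation) via the same triangle-inequality argument for the $N$-th component.
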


\begin{proof}
For $i = 1,\dotsc,N-1$, $j = 0,\dotsc,N-1$ and $r \in \N$ set
\begin{align*}
&\lambda^{(rN + j)}_i \coloneqq \begin{cases} \aprox^\eps_{\varphi^*_i}(\lambda_{f^{(rN+i-1)},i}) & \!\!\!\!\!\text{if } i \leq j,\\  \aprox^\eps_{\varphi^*_i}(\lambda_{f^{((r-1)N+i-1)},i})& \!\!\!\!\!\text{if } i > j ,
\end{cases}\\
&\lambda_N^{(rN + j)} \coloneqq - \sum_{i=1}^{N-1} \lambda_i^{(rN + j)},
\end{align*}
where $\lambda_{f,i}$ is defined as in Lemma \ref{prop:properties_entropic-c-trafo}.
Since $\aprox$ is 1-Lipschitz, 
by Lemma~\ref{prop:properties_entropic-c-trafo}~i) 
and the definition of the iterates, for $i \leq j$ we obtain that
\begin{align}
&\bigl\Vert f^{(rN+j)}_i + \lambda_i^{(rN+j)} \bigr\Vert_{\infty}\\
={}&\bigl\Vert \bigl( f^{(rN+i-1)} \bigr)^{(c,\eps,\varphi,i)} +\lambda_i^{(rN+j)} \bigr\Vert_{ \infty} \\
={}&\bigl\Vert  -\aprox^\eps_{\varphi^*_i}\bigl(-(f^{(rN+i-1)} )^{(c,\eps,i)}\bigr) \\
&+\aprox^\eps_{\varphi^*_i}(\lambda_{f^{(rN+i-1)},i}) \bigr\Vert_{ \infty} \\
\leq{} &\bigl\Vert  ( f^{(rN+i-1)} )^{(c,\eps,i)} 
+ \lambda_{f^{(rN+i-1)},i} \bigr\Vert_{\infty} \hspace*{-1cm} \\
\leq {}&\Vert c \Vert_{\infty} + \sup_i \eps \Vert \ln (\sigma_{\mu_i}) \Vert_{ \infty}
\leq M_1
\end{align}
for some $M_1<\infty$.
In the case $N>i>j$, we have
$$f^{(rN+j)}_i=\bigl( f^{((r-1)N+i-1)} \bigr)^{(c,\eps,\varphi,i)},$$
such that this case works similarly.
Thus, it remains to estimate the last component.
By Proposition \ref{prop:sinkhorn_estimate} it holds for $n \in \N$ that \smash{$\dual(f^{(0)}) \leq \dual(f^{(n)})$}.
Due to Lemma~\ref{thm:D_eps_coercive}, this ensures the existence of $M_2 > 0$ such that \smash{$\Vert \bigoplus_{i=1}^N f^{(n)}_i \Vert_{\infty} \leq M_2$} for all $n \in \N$.
Thus,
\begin{align*}
    &{} \bigl \Vert f^{(rN+j)}_N + \lambda^{(rN+j)}_N \bigr \Vert_{\infty}\\
    \leq {}&  \Bigl \Vert \bigoplus_{i=1}^{N} f^{(rN+j)}_i \Bigr \Vert_{\infty}\\
    &+ \Bigl \Vert \lambda^{(rN+j)}_N - \bigoplus_{i=1}^{N-1} f^{(rN+j)}_i \Bigr \Vert_{\infty}\\
    \leq {}& M_2 + \sum_{i=1}^{N-1} \bigl \Vert f^{(rN+j)}_i + \lambda_i^{(rN+j)} \bigr \Vert_{\infty}\\
    \leq {}&M_2 + (N-1)M_1.
\end{align*}
For $M \coloneqq M_2 + (N-1)M_1$ the assertion follows.
\end{proof}

Now, we can prove convergence of the Sinkhorn iterates under mild additional assumptions on the entropy functions.
\begin{theorem}\label{thm:sink_converges}
Let
$\ln(\sigma_{\mu_i}) \in L^\infty(\X_i,\gamma_i)$, $i = 1,\dotsc,N$. 
Assume that $[0, \infty)\subset \dom \varphi_i$ for all $i=1, \dots, N-1$, and that $\dual(f^{(0)}) > -\infty$.
Then, the sequence $(f^{(n)})_{n \in \N}$ induced by Algorithm~\ref{def:sinkhorn} satisfies $\Vert (f^{(n)})^\oplus - \hat{f}^\oplus \Vert_p \xrightarrow[]{n \to \infty} 0$ for any optimal solution $\hat f \in L^{\infty,\times}(\X,\gamma)$ of \eqref{eq:dual_problem_reg_mm_ub_OT} for every $p \in [1,\infty)$.
Moreover, if $N-1$ of the $\varphi_i^*$ are strictly convex, then the optimal $\hat{f}$ is unique and $\Vert f^{(n)} - \hat{f} \Vert_p \to 0$ for every $p \in [1,\infty)$.
\end{theorem}
\begin{proof}
First, we show that the sequence $(f^{(n)})_{n \in \N}$ is uniformly bounded.
By Lemma~\ref{lem:sej_lem5}, 
there exists a sequence $(\lambda^{(n)})_{n\in \N} \subset \R^N$, with \smash{$\sum_{i=1}^N \lambda_i^{(n)} = 0$} and $M > 0$ such that
\begin{equation} \label{eq:help}
\bigl\Vert f_i^{(n)} + \lambda_i^{(n)} \bigr\Vert_{\infty} \leq M
\end{equation}
for all $i=1,\ldots,N$, $n\in \N$.
Define $g_i^{(n)} \coloneqq f_i^{(n)} + \lambda_i^{(n)}$.
To obtain uniform boundedness,
it suffices to show that
\smash{$\max_i |\lambda_i^{(n)}|$} is uniformly bounded in $n$.
We have for any $q_i \in \dom{\varphi_i^*}$ and $p_i \in \partial  \varphi_i^*(q_i)$ by the first order convexity condition and since elements of $\partial \varphi_i^*(q_i)$ are non-negative, that
\begin{align*}
    &\bigl \langle \mu_i, - \varphi_i^*(-f^{(n)}_i)\bigr \rangle\\
		= {}&\bigl \langle \mu_i, - \varphi_i^*(-g_i^{(n)} + \lambda_i^{(n)})\bigr \rangle\\ 
		\leq {}&\Bigl\langle \mu_i, -\varphi_i^*(q_i) + p_i \bigl(g_i^{(n)} - \lambda_i^{(n)} + q_i\bigr)\Bigr \rangle
		\\
    \leq {}&\Bigl \langle \mu_i, - \varphi_i^*(q_i) + p_i \bigl(\bigl\Vert g_i^{(n)}
		\bigr\Vert_{\infty} - \lambda_i^{(n)} + q_i\bigr) \Bigr \rangle.
\end{align*}
Consequently, we obtain by \eqref{eq:help} that
\begin{align*}
    \sum_{i=1}^N \bigl \langle \mu_i, - \varphi_i^*(-f_i^{(n)})\bigr \rangle
        \leq
        -\sum_{i=1}^N \mu_i (\X_i) p_i \, \lambda_i^{(n)}\\
   +\underbrace{\sum_{i=1}^N  \mu_i(\X_i) \bigl(p_i (M + q_i)-\varphi_i^*(q_i)\bigr)}_{=:K(p_i, q_i)}.
\end{align*}
Setting $m_i \coloneqq \mu_i (\X_i)$, $i=1,\ldots,N$, and using that the $\lambda^{(n)}_i$ sum up to zero, we conclude 
\begin{align} \label{now}
   &\sum_{i=1}^N \bigl \langle \mu_i, - \varphi_i^*(-f_i^{(n)})\bigr \rangle \nonumber \\
        \leq
       &\sum_{i=1}^{N-1} \bigl(m_N p_N- m_i p_i) \lambda^{(n)}_i + K(p_i, q_i).\quad
\end{align}
First, since $\dom \varphi_N^* =(-\infty, (\varphi_N)'_\infty)$ with $(\varphi_N)'_\infty \geq 0$, we can fix some $q_N\in \mathrm{int}(\dom \varphi_N^*)$ and some $p_N\in \partial \varphi_N^*(q_N)\neq \emptyset$.
Assume that there exists at least one $i \in \{1,\ldots,N\}$ 
such that $\smash{(\lambda_i^{(n)})_{n \in \N}}$ is unbounded.
Since the $\smash{\lambda_i^{(n)}}$ sum to zero, there then also exists at least one such $i \in \{1,\ldots,N-1\}$.
For each of these $i$, we can extract a subsequence $(n_{k})_k$ such that either \smash{$\lambda_i^{(n_k)} \to -\infty$} or \smash{$\lambda_i^{(n_k)} \to \infty$}.
In the first case, choose some $p_i > 0$ small enough such that $m_N p_N > m_i p_i$.
By assumption, $p_i\in \mathrm{int}(\dom \varphi_i)$, such that there exists $q_i\in \partial \varphi_i(p_i)$.
Since $\varphi\in \Gamma_0(\R)$, it follows that $p_i\in \partial \varphi_i^*(q_i)$.
Moverover, since
\[
\infty > p_i q_i = \varphi_i(p_i) + \varphi_i^*(q_i)
\]
and $p_i\in \dom \varphi_i$, we also have $q_i\in \dom(\varphi_i^*)$.
Similarly, if \smash{$\lambda_i^{(n_k)} \to \infty$}, choose some $p_i<\infty$ such that $m_N p_N < m_i p_i$ and $q_i\in \dom \varphi_i^*$ with $p_i\in \partial \varphi_i^*(q_i)$.
Now, we have by Proposition~\ref{prop:sinkhorn_estimate} that $\dual(f^{(0)}) \leq \dual(f^{(n_{k})})$.
Since $(f^{(n_k)})^\oplus = (g^{(n_k)})^\oplus$ and the \smash{$g_i^{(n_k)}$}, $i=1,\ldots,N$,
are uniformly bounded, the second summand in $\dual(f^{(n_k)})$ remains bounded as $k\rightarrow \infty$, while the first summand in $\dual(f^{(n_k)})$ goes to $-\infty$ by \eqref{now} with the above chosen $(q_i, p_i)$. 
This is a contradiction to $\dual(f^{(0)}) > -\infty$. 
Thus, there is $\tilde{M}>0$ such that for $i \in \{1,\dotsc,N\}$ and $n \in \N$ it holds
\begin{align*}
\bigl\Vert f_i^{(n)}\bigr\Vert_{\infty}
\leq {}&\bigr \Vert f_i^{(n)} + \lambda_i^{(n)}\bigl \Vert_{\infty}
+ \bigl \vert \lambda_i^{(n)} \bigr \vert\\
\leq {}&M + \tilde{M}.
\end{align*}
Hence, $(f^{(n)})_n$ is a uniformly bounded sequence.
By Lemma~\ref{prop:properties_entropic-c-trafo} ii), we know that the operator
$S^{(c,\eps,\varphi,j)} \colon L^{\infty,\times}(\X,\gamma)\to L^p(\X_j,\gamma_j)$ 
is compact for every $j$ and $p \in [1,\infty)$.
Consequently, we get existence of a converging subsequence $(f^{(n_k)})_{k \in \N}$ in $L^{p,\times}(\X,\gamma)$.
As $f^{(n_k)}$ is uniformly bounded in $L^{\infty,\times}(\X,\gamma)$ and since $\infty$-norm balls are closed under $L^p$ convergence, we get that the limit $\hat f$ additionally satisfies $\hat f \in L^{\infty,\times}(\X,\gamma)$.

Now, we prove optimality of $\hat{f}$.
Note that there is $j \in \{0,\dotsc,N-1\}$ so that $n_k \equiv j \mod N$ for infinitely many $k \in \N$.
Without loss of generality assume $j = 0$.
Then, we restrict $(n_k)_k$ to $n_k \equiv 0 \mod N$ for all $k \in \N$.
Using the Lipschitz continuity of $\ln$ and $\exp$ on compact sets and the 1-Lipschitz continuity of $\aprox$ together with the uniform boundedness of the sequence $(f^{(n)})_{n \in \N}$, we obtain that
{\allowdisplaybreaks
\begin{align*}
&\bigl\Vert \bigl(f^{(n_k)}\bigr)^{(c,\eps,\varphi,j)} - \hat{f}^{(c,\eps,\varphi,j)} \bigr\Vert^p_{p} \\
\leq {}
&\bigl\Vert \bigl(f^{(n_k)}\bigr)^{(c,\eps,j)} - \hat{f}^{(c,\eps,j)} \bigr\Vert^p_{p}
\\
\leq {}&C \,  \biggl\Vert 
\int_{\X_{\mj}} \exp\biggl(\frac{(f^{(n_k)})_{\mj}^\oplus - c}{\eps}\biggr) \\
&\quad\,\,\,- \exp\biggl(\frac{{\hat f}_{\mj}^\oplus - c}{\eps}\biggr) \dx \gamma_{\mj}^\otimes \biggr\Vert^p_{p} \\
\leq{}& C \int_{\X} \bigl\vert (f^{(n_k)})_{\mj}^\oplus - {\hat f}_{\mj}^\oplus \bigr \vert^p \dx \gamma^\otimes \\
\leq{}&C \bigl\Vert f^{(n_k)} - \hat f \bigr\Vert^p_{p}\to 0
\end{align*}}
for every $j=1,\ldots,N$, where $C$ stands for some unspecified, possibly changing constant.
In particular, it holds
\begin{align*}
f_1^{(n_k+1)} \to &\hat{f}^{(c,\eps,\varphi,1)} \in L^p(\X_1, \gamma_1).
\end{align*}

As all $\varphi_i^*$ are lsc, the dominated convergence theorem implies that $\dual(f^{(n_j)}) \to \dual(\tilde f)$ for any a.e.\ convergent subsequence $(f^{(n_j)})_{j \in \N}$ of $(f^{(n)})_{n \in \N}$  with limit $\tilde f$.
Due to this continuity property and since $(\dual(f^{(n)}))_n$ is a convergent sequence, we get
\begin{align*}
    &\dual\bigl(\hat{f}^{(c,\eps,\varphi,1)},\hat{f}_2,\dotsc,\hat{f}_N\bigr)\\
    ={}& \lim_{k \to \infty} \dual\bigl(f^{(n_k+1)}\bigr)
    = \lim_{k \to \infty} \dual\bigl(f^{(n_k)}\bigr)\\
    ={} &\dual(\hat{f}).
\end{align*}
Consequently, Proposition~\ref{prop:sinkhorn_estimate} implies $\hat{f}_1 = \hat{f}^{(c,\eps,\varphi,1)}$.
In the same way, we obtain
\begin{align*}
    \dual\bigl(\hat{f}^{(c,\eps,\varphi,1)},\hat{f}^{(c,\eps,\varphi,2)},\hat{f}_3\dotsc,\hat{f}_N\bigr)
    = \dual(\hat{f}).
\end{align*}
Due to $\hat{f}_1 = \hat{f}^{(c,\eps,\varphi,1)}$, this gives $\hat{f}_2 = \hat{f}^{(c,\eps,\varphi,2)}$. Proceeding iteratively this way, we obtain that
$\hat{f}_i = \hat{f}^{(c,\eps,\varphi,i)}$ for all $i=1, \dots, N$.
Hence, Proposition~\ref{prop:sinkhorn_estimate} implies that $\hat{f}$ is an optimal dual vector.

If $N-1$ of the $\varphi^*$ are strictly convex, then the maximizer in \eqref{eq:dual_problem_reg_mm_ub_OT} is unique and $(f^{(n)})_{n \in \N}$ converges.
Otherwise, we obtain convergence of $((f^{(n_j)})^\oplus)_{j \in \N}$ to $\hat{f}^\oplus$ in $L^p(\X,\gamma^\otimes)$.
Since $\hat{f}^\oplus$ is the same for all possible limit points $\hat{f}$, convergence of $((f^{(n)})^\oplus)_{n \in \N}$ follows.
\end{proof}

\begin{remark} [Relation to UOT in \cite{SFVP19}] 
For entropy functions satisfying the assumptions of Theorem \ref{thm:sink_converges}, our result can be seen as a generalization of the UOT result in Séjourné et al.\ \cite{SFVP19}
to the multi-marginal case and non-Lipschitz costs.
Notably, the result by Séjourné et al.\ covers some additional entropy functions.
\end{remark}
Finally, we want to remark that
all results of this section also hold true if we do not assume that the spaces $\X_i$, $i=1,\ldots,N$, are compact as long as the cost function $c$ remains bounded.

\section{Barycenters and Tree-Structured Costs}\label{sec:BaryInt}
In this section, we are interested in the computation of $\mathrm{UOT}_\eps$ barycenters 
and their relation to $\mathrm{UMOT}_\eps$
with tree-structured costs.
An undirected graph $\G = (\V,\E)$ with $N$ nodes 
$\V= \{1,\ldots,N\}$ and edges $\E$ is a \emph{tree} if it is acyclic and connected. We write $e = (j,k)$ if $e$ joins the nodes $j$ and $k$, 
where we agree that $j < k$ in order to count edges only once.
Let $\deg(i)$ denote the number of edges in node $i \in \mathcal V$.
A node $i \in \mathcal V$ is called a \emph{leaf} if $\deg(i) = 1$. 
By $\mathcal N_j$ we denote the set of neighbors of node $j$.
For a given tree, let
$t = (t_e)_{e \in \E}$ with $t_{e} \in [0,1]$ and $\sum_{e \in \E} t_e = 1$. 
A cost function $c_t$ is said to be \emph{tree-structured}, if it is of the form
\begin{equation}\label{eq:tree-structured-costs}
    c_t = \sum_{e \in \E} t_e c_e  = \sum_{(j,k) \in \E} t_{(j,k)} c_{(j,k)}.
\end{equation}
In Section~\ref{sec:barycenters}, we consider the case where
the tree is star-shaped, i.e., $\E=\{ (1, N), \dots, (N-1, N) \}$, see Fig.~\ref{fig:trees} left.
General tree-structured costs as, e.g., those in Fig.~\ref{fig:trees} middle and right 
are addressed in Section~\ref{subsec:tree-costs}.
We restrict our attention to $\X_i$, $i=1,\ldots,N$, which are either finite or compact subsets of $\R^{d_i}$.
Moreover, all references measures $\gamma_i$, $i=1,\ldots,N$, are counting measures, respectively Lebesgue measures, so that we regularize exclusively with the entropy from Rem.~\ref{rem:kl} ii) or iii), respectively.

\begin{figure*}[t!]
	\centering
	\includegraphics[width=0.2\textwidth]{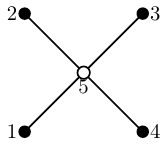} \hspace{1cm}
	\includegraphics[width=0.2\textwidth]{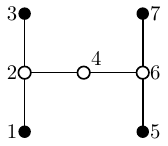} \hspace{1cm}
	\includegraphics[width=0.037\textwidth]{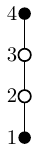}
	
	\caption{Examples of trees: star-shaped (left), H-shaped (middle), line-shaped (right).
	}
	\label{fig:trees}
\end{figure*}	

\subsection{Barycenters}\label{sec:barycenters}
For the barycenter problem with respect to $\mathrm{UOT}_\eps$, we introduce an additional finite set $\Y$ (or compact set $\Y \subset \R^{d_{N+1}}$), and use for $\xi \in \mathcal M^+(\Y)$ the entropy function
\begin{equation} \label{eq:psi}
\psi \coloneqq \iota_{\{1\}}, \quad D_\psi(\cdot, \xi) = \iota_{\{\xi\}},
\end{equation}from Example~\ref{ex:1} i).
Let $c_i \in  C(\X_i \times \Y)$, $i=1,\ldots,N$, be non-negative cost functions.
The corresponding tree is star-shaped, i.e., given by $\V=\{ 1, \dots, N+1\}$ and $\E=\{ (1, N+1), \dots, (N,N+1) \}.$
To emphasize the dependence of $\UOT_\eps$ on these functions, we write
$\mathrm{UOT}_\eps^{(c_i,\varphi_i,\psi)}$.
We use an analogous notation for $\UMOT_\eps$.
Let 
\[
\Delta_N \coloneqq \Bigl\{t = (t_i)_{i=1}^N: t_i \ge 0, \, \sum_{i=1}^N t_i = 1\Bigr\}
\]
be the $(N-1)$-dimensional probability simplex.
For given barycentric coordinates $t \in \Delta_N$, the \emph{barycenter} $\hat{\xi} \in \M^+(\Y)$ 
of $\mu$ with respect to $\mathrm{UOT}_{\eps}$ 
is given by
    \begin{align}\label{eq:ub_bary}
     \hat \xi \coloneqq &\argmin_{\xi \in \M^+(\Y)} 
		\sum_{i=1}^N t_i \UOT_{\eps/t_i}^{(c_i,\varphi_i,\psi)}(\mu_i,\xi),\\
		=   &\argmin_{\xi \in \M^+(\Y)}
		\sum_{i = 1}^N  \min_{\pi^{(i)} \in \mathcal M^+(\X_i \times \Y)} 
		\int_{\X_i \times \Y}\!\! t_i c_i \dx \pi^{(i)}\notag\\
		&+ \eps E \bigl(\pi^{(i)}\bigr) + t_i D_{\varphi_i}\bigl(P_{\X_i} \strut{}_\#\pi^{(i)} , \mu_i\bigr), \\
		& \; \mathrm{subject \; to} \quad P_{\Y}\strut{}_\#\pi^{(i)} = \xi, \; i = 1,\ldots,N. \nonumber
\label{eq:ub_bary_1}		
    \end{align}
Note that by the choice of $\psi$  the barycentric marginal $\xi$ is exactly matched.
By Proposition \ref{prop:ub_existence}, the involved $\UOT$ problems have unique solutions.
Moreover, it was shown in \cite[Sec.\ 5.2]{CPSV17} that a unique barycenter exists due to the regularization.
However, these barycenters do not correspond to a shortest path since $\UOT_\eps$ is not a metric.

To establish a relation with the multi-marginal setting,
 we exploit that the optimal plan of the multi-marginal problem with cost function
\begin{equation}\label{eq:cost}
c_t \coloneqq \sum_{i=1}^N t_i c_i \in C(\X \times \Y)
\end{equation}
is readily determined by its marginals.
We need the following auxiliary lemma.

\begin{lemma}\label{lem:bary_lemma}
Let $\pi \in \M^+(\X \times \Y)$ be absolutely continuous with respect to Lebesgue measure, respectively counting measure, on $\X \times \Y$ with density $\sigma_\pi$.
If there exists $a_i \in \C(\X_i \times \Y)$, $i = 1,\ldots,N$, such that 
$\sigma_{\pi} = \prod_{i=1}^N a_i$, 
then $\pi$ is related to its marginals $\pi_i \coloneqq {P_{\X_i \times \Y}}_\# \pi$, $i = 1,\ldots,N$, and $\pi_{N+1} \coloneqq {P_{\Y}}_\# \pi$ via
\[\sigma_{\pi} = \frac{\sigma_{\pi_{1}} \cdots \sigma_{\pi_{ N}}}{{\sigma^{N-1}_{\pi_{N+1}}}},\]
where $\sigma_{\pi_i}$ denotes the density of $\pi_i$ with respect to the Lebesgue, respectively counting measure, on $\X_i \times \Y$.
\end{lemma}

\begin{proof}
By abuse of notation we denote the Lebesgue measure, respectively counting measure, by $\lambda$. The underlying space becomes clear from the context.
By assumption, the marginal densities of $\pi$ read
\[
\sigma_{\pi_j}(x_j,y) = a_j(x_j,y) \prod_{i \neq j} \int_{\X_i} a_i(x_i, y) \dx \lambda(x_i),
\]
$j = 1,\dotsc,N$.
Since $P_{\Y}\strut{}_\#  \pi = \pi_{N+1}$, 
we have 
\smash{$\sigma_{\pi_{N+1}} = \prod_{i = 1}^N \int_{\X_i} a_i (x_i, \cdot) \dx \lambda(x_i)$}, 
which finally yields
\begin{align*}
    \sigma_{\pi} &= \prod_{i=1}^N a_i\\
		&= 
		\frac{ \sigma_{\pi_{1} } \ldots \sigma_{\pi_{N}} }
		{
		\prod_{j=1}^N
		\Bigl(\prod_{i \neq j} \int_{\X_i} a_i (x_i, \cdot)\dx \lambda_i(x_i)\Bigr)
		}\\ 
		&= 
		\frac{ \sigma_{\pi_{1}} \ldots \sigma_{\pi_{N}} }
		{\sigma^{N-1}_{\pi_{N+1}}}.
\end{align*}
\end{proof}
Now, we can draw the aforementioned connection between the barycenter problem \eqref{eq:ub_bary} and $\UMOT_\eps^{(c_t,t\varphi,\varphi_{N+1})}$, where
\begin{align} \label{eq:tildepsi}
\varphi_{N+1} &\coloneqq \iota_{[0,\infty)},\\
D_{\varphi_{N+1}}(\cdot, \xi) &= 0, \quad \xi \in \M^+(\Y),
\end{align}
see Example \ref{ex:1}ii).
Due to the special form of this entropy, the $(N+1)$-th input of $\UMOT_\eps^{(c_t,t\varphi,\varphi_{N+1})}$ has no effect on the functional.
To emphasize this, we use the notation
\begin{align}\label{eq:xxl}
&\UMOT_\eps^{(c_t,t\varphi,\varphi_{N+1})}
(\mu,\ast)\\
\coloneqq &\min_{ \pi \in \mathcal M^+(\X \times \Y) }
\int_{\X \times \Y} c_t \dx \pi + \eps E(\pi)
		\\ &+ 
		\sum_{i=1}^N D_{t_i \varphi_i}(P_{\X_i} \strut{}_\#\pi, \mu_i).
\end{align}
The next theorem establishes the relation between the barycenter problem and this multi-marginal optimal transport problem.
\begin{theorem}\label{thm:bary_mot_relation}
For $i = 1,\ldots,N$,
let $\mu_i \in \M^+(\X_i)$ 
and cost functions $c_i\in C(\X_i \times \Y)$ be given.
Define $c_t \in C(\X \times \Y)$ by \eqref{eq:cost}, $\psi$ by \eqref{eq:psi}  and $\varphi_{N+1}$  by 
\eqref{eq:tildepsi}. 
If $\hat{\pi}$ is optimal for \eqref{eq:xxl}
then $P_{\Y} \strut{}_\# \hat{\pi}$ minimizes the functional $F(\xi)$ given by
\begin{align}\label{eq:bary_mot_relation}
   \sum_{i=1}^N t_i \UOT_{\eps/t_i}^{(c_i,\varphi_i,\psi)}(\mu_i,\xi) - \eps(N-1) E(\xi).\quad
\end{align}
\end{theorem}

\begin{proof}
Again, let $\lambda$ denote the the Lebesgue or counting measure, respectively.
Set
\[
C = \eps \lambda(\Y) \biggl(\sum_{i=1}^N \lambda(\X_i) - (N-1) - \prod_{i=1}^N \lambda(\X_i)\biggr).
\]
Estimating in both directions, we show that
\begin{align*}
&\inf_{\xi \in \M^+(\Y)} F(\xi)
= \UMOT_\eps^{(c_t,t\varphi,\varphi_{N+1})}(\mu,\ast) + C. 
\end{align*}

1. Fix $\xi \in \M^+(\Y)$ such that optimal plans $\hat \pi^{(i)} \in \M^+(\X_i \times \Y)$ 
for $\UOT_\eps^{(c_i,\varphi_i,\psi)}(\mu_i,\xi)$, $i = 1,\dotsc,N$, exist.
Then, we define
\[
\pi \coloneqq  \frac{\sigma_{\hat \pi^{(1)}} \cdots \sigma_{\hat \pi^{(N)}}}{{\sigma_\xi}^{N-1}} 
\, \lambda^\otimes \in \M^+(\X \times \Y),
\]
which yields $P_{\X_i \times \Y} \strut{}_\# \pi = \hat \pi^{(i)}$ and $P_{\Y} \strut{}_\# \pi = \xi$.
Consequently, we get
\begin{align*}
&F(\xi)
=\sum_{i = 1}^N \biggl(\int_{\X_i \times \Y} t_i c_i \dx \hat \pi^{(i)} + \eps E\bigl(\hat \pi^{(i)}\bigr)\biggr) \\
		&+ \sum_{i = 1}^N t_i D_{\varphi_i}\bigl(P_{\X_i}\strut{}_\# \hat \pi^{(i)} , \mu_i\bigr) - \eps (N-1) E(\xi)\\
		=&
			\int_{\X \times \Y} \!\!c_t \dx \pi + 
			\eps \Bigl( \sum_{i = 1}^N E\bigl(\hat \pi^{(i)}\bigr) \!-\! (N-1) E(\xi) \Bigr)
		\\
		&+ \sum_{i = 1}^N t_i D_{\varphi_i}\bigl(P_{\X_i} \strut{}_\# \hat \pi^{(i)} , \mu_i\bigr).
\end{align*}
Using the definition of $\pi$, we obtain
{\allowdisplaybreaks
\begin{align*}
&\sum_{i = 1}^N E\bigl(\hat \pi^{(i)}\bigr) - (N-1) E(\xi)\\
= {}&\sum_{i = 1}^N \biggl(\int_{\X_i \times \Y} \ln\bigl(\sigma_{\hat \pi^{(i)}}\bigr) \dx \hat \pi^{(i)} + \lambda(\X_i \times \Y)\biggr)\\
&- (N-1) \biggl(\int_{\Y} \ln(\sigma_\xi) \dx \xi + \lambda(\Y)\biggr)\\
&\underbrace{-\sum_{i=1}^N \pi^{(i)}(\X_i \times \Y) + (N-1)\xi(\Y)}_{= -\pi(\X \times \Y)}\\
= &\int_{\X \times \Y} \ln(\sigma_\pi) \dx \pi - \pi(\X \times \Y)\\
&+ \sum_{i=1}^N \lambda(\X_i \times \Y) - (N-1)\lambda(\Y)\\
= {}&E(\pi) + C/\eps.
\end{align*}
}
Incorporating $t_i D_{\varphi_i} = D_{t_i \varphi_i}$, we obtain
\begin{align*}
 F(\xi)  
		= &\int_{\X \times \Y} c_t \dx \pi + \eps E(\pi)+C \\
		&+ \sum_{i=1}^N D_{t_i \varphi_i}(P_{\X_i} \strut{}_\# \pi, \mu_i) .
\end{align*}		
Thus, minimizing the right hand side over all $\pi \in \M^+(\X \times \Y)$,
we get
\[
F(\xi) \geq \UMOT_\eps^{(c_t,t\varphi,\varphi_{N+1})}(\mu,\ast) + C,
\]
such that minimizing
the left hand side over all $\xi \in \M^+(\Y)$ yields the desired estimate.

2. Next, we show the converse estimate. 
Let $\hat \pi \in \M^+(\X \times \Y)$ be the optimal plan and $(\hat f, \hat g) \in L^{\infty,\times}(\X) \times \C(\Y)$ be optimal dual potentials for $\UMOT_\eps^{ (c_t,t\varphi,\varphi_{N+1})}\left(\mu,\ast\right)$. 
For $i = 1,\dotsc,N$, define $a_i \in \C(\X_i \times \Y)$ by
\begin{align*}
a_1 &\coloneqq \exp\Bigl(\frac{\hat f_1 + \hat g - t_1 c_1}{\eps}\Bigr) \quad \text{ and }\\
a_i &\coloneqq
\exp \Bigl( \frac{\hat f_i - t_i c_i}{\eps}\Bigr), \quad i = 2,\dotsc,N.
\end{align*}
The definition of $c_t$ together with \eqref{eq:pd} yields
\[
\sigma_{\hat \pi} = \exp \Bigl( \frac{\hat f^\oplus + \hat g - c_t}{\eps} \Bigr) = \prod_{i=1}^N a_i.
\]
Then Lemma \ref{lem:bary_lemma} implies
\[
\hat \pi = 
(\sigma_{\hat \pi_{1}} \ldots \sigma_{\hat \pi_{N}}/{\sigma_\xi}^{N-1})\lambda^\otimes,
\]
where $\hat{\pi}_i \coloneqq {P_{\X_i \times \Y}}_\# \hat{\pi}$, $i = 1,\ldots,N$, and $\xi \coloneqq P_{\Y}\strut{}_\#\hat \pi$.
Similarly as for the previous considerations, this results in
\begin{align*}
    &\UMOT_\eps^{(c_t,t\varphi,\varphi_{N+1})}(\mu,\ast) + C\\
    = 
		{}&\sum_{i = 1}^N \biggl(
		\int_{\X_i \times \Y} t_i c_i \dx \hat \pi_{i} 
		+ 
		\eps E(\hat \pi_{i})\biggr)\\
		{}& + \sum_{i = 1}^N t_i D_{\varphi_i}\bigl(P_{\X_i}\strut{}_\#\hat \pi_{i} , \mu_i\bigr)
		- \eps (N-1) E(\xi)
    \\
    \geq 
		{}&
		F(\xi).
\end{align*}
Minimizing the right hand side over all $\xi \in \M^+(\Y)$ yields 
\begin{align}
    \! \UMOT_\eps^{(c_t,t\varphi,\varphi_{N+1})}(\mu,\ast) + C
		\geq
        \!\!\inf_{\xi \in \M^+(\Y)} \! F(\xi)
		\end{align}
and thus the desired equality.
As a direct consequence we get $P_{\Y}\strut{}_\# \hat \pi$ minimizes \eqref{eq:bary_mot_relation}. This concludes the proof.
\end{proof}

\begin{remark}\label{rem:equivalence_generalization}
The previous result directly generalizes to $\UMOT$ problems that also regularize the marginal on $\Y$.
More precisely, let $\mu_{N+1} \in \M^+(\Y)$ and $\varphi_{N+1} \in \Gamma_0(\R)$ be an arbitrary entropy function.
If $\hat{\pi}$ is the optimal plan for 
$\mathrm{UMOT}_\eps^{(c_t,t\varphi,\varphi_{N+1})}\left( \mu,\mu_{N+1}\right)$,
then $P_{\Y} \strut{}_\# \hat{\pi}$ solves
\begin{align}\label{eq:bary_mot_relation_2}
    &\min_{\xi \in \M^+(\Y)} \sum_{i=1}^N t_i \UOT_{\eps/t_i}^{(c_i,\varphi_i,\psi)}(\mu_i,\xi) \nonumber\\
    &- \eps(N-1) E(\xi) + D_{\varphi_{N+1}}(\xi,\mu_{N+1}).
\end{align}
\end{remark}

\begin{remark}[Comparison of formulations \eqref{eq:ub_bary} and \eqref{eq:xxl}]\label{rem:smear}
The proof of Theorem~\ref{thm:bary_mot_relation} reveals that the barycenter $\hat\xi$ in \eqref{eq:ub_bary} is ``over-regularized'', since it appears as the marginal measure of $\pi^{(i)}$ in each of the $N$ entropy terms $E(\pi^{(i)})$.
On the other hand, the proposed multi-marginal approach does not involve these $N-1$ superfluous regularizers.
This ensures that the minimizer $\hat \xi$ is less ``blurred'' compared to the original $\UOT_\eps$ barycenter, which is favorable for most applications.
A numerical illustration of this behavior is given in Section~\ref{sec:numerical_examples}.
We will see in Subsection~\ref{sec:tree_costs} that for tree-structured costs the computation of optimal transport plans for the multi-marginal case has the same complexity per iteration as for the ``barycentric'' problems.

Furthermore, the computation of barycenters with an additional penalty term as outlined in Rem.~\ref{rem:equivalence_generalization} is possible with the Sinkhorn-type algorithm detailed in Sec.~\ref{sec:tree_costs}.
In contrast, we are unaware of an efficient algorithm to solve the corresponding pairwise coupled formulation.

On the other hand, we only obtain the equivalence of the ``pairwise coupled'' formulation \eqref{eq:ub_bary} and the multi-marginal approach for the choices made in \eqref{eq:psi} and \eqref{eq:tildepsi}.
These enforce that all marginals of the plans in \eqref{eq:ub_bary} coincide with the barycenter, which is not necessary in general.
Although this generalization comes at the cost of a nested optimization problem, the pairwise coupled formulation is thus more flexible than $\UMOT$.



\end{remark}

\begin{remark}[Barycenters and MOT]\label{rem:bary-mot}
By Theorem \ref{thm:bary_mot_relation}, formula \eqref{eq:bary_mot_relation} with $\varepsilon = 0$ and $\varphi_i = \iota_{\{1\}}$, $i=1,\ldots,N$,
	our MOT formulation with the cost 
	$$c_t(x_1,\ldots,x_{N+1}) = \sum_i t_i c_i(x_i,x_{N+1})$$
	is equivalent to the OT barycenter problem.
	There is another reformulation of the OT barycenter problem
	via the McCann interpolation using the cost function
	$$
	c(x_1,\ldots,x_N) \coloneqq \min_y \sum_{i=1}^N t_i c_i(x_i,y)
	$$
	if a unique minimizer exists, see \cite{CE10}.
	To the best of our knowledge, there is no similar reformulation for our setting.
\end{remark}

\subsection{General Tree-Structured Costs}\label{subsec:tree-costs}
In the above barycenter problem, we have considered $\UMOT_\eps$ with a tree-structured cost function,
where the tree was just star-shaped.
In the rest of this section, we briefly discuss an extension of the
$\UMOT_\eps$ problem to costs of the form \eqref{eq:tree-structured-costs}, where $\G=(\V, \E)$ is a a general tree graph with cost functions $c_{(j,k)} \in C(\X_j\times \X_k)$ for all $(j, k)\in \E$.
For the balanced case, this topic was addressed in \cite[Prop.~4.2]{HRCK20}.


For a disjoint decomposition
$$
\mathcal V = V \cup U, \quad V \cap U = \emptyset,
$$ 
where $V$ contains only leaves,
and measures $\mu_v \in \M^+(\X_v)$, $v \in V$,
we want to find measures $\mu_u \in \M^+(\X_u)$, $u \in U$, that solve the problem
\begin{align}\label{ex:uot_graph}
    &\inf_{(\mu_u)_{u \in U}} 
		\sum_{(j,k) \in \E} t_{(j,k)} \UOT_{\eps/t_{(j,k)}}^{(c_{(j,k)}, \varphi_{j},\varphi_{k})}(\mu_{j},\mu_{k}).
\end{align}
Again, we assume that the unknown marginals $\mu_u$ 
are exactly matched, i.e., $\varphi_u = \iota_{\{1\}}$, $u \in U$.

\begin{example}   
For the barycenter problem~\eqref{eq:ub_bary}, we have $V = \{1,\ldots,N\}$ and the tree is star-shaped, meaning that \[
\mathcal E = \{(j,N+1): j=1,\ldots,N\},
\]
see Fig.~\ref{fig:trees} left.
In Fig.~\ref{fig:trees} middle, we have an H-shaped tree with $N=7$, edge set
$\mathcal E = \{(1,2),(2,3),(2,4),(4,6), (5,6),(6,7)\}$, and we consider problem \eqref{ex:uot_graph}
with $V = \{1,3,5,7\}$.
Finally, Fig.~\ref{fig:trees} right shows a line-shaped tree
with $N=4$, edge set $\mathcal E = \{(1,2),(2,3),(3,4)\}$ and $V = \{1,4\}$.
This graph is related to a so-called multiple barycenter problem,
and its solution was discussed for the balanced case, e.g., in \cite{Caillaud2020}.
\end{example}

In general, it is unclear how to solve problem \eqref{ex:uot_graph} using Sinkhorn iterations.
Therefore, we propose to solve a related multi-marginal problem
\begin{equation}\label{eq:mult_bary}
\UMOT_\eps^{\left(c_t, (t_v \varphi_v)_{v \in V}, (\varphi_u)_{u \in U} \!\right)} ((\mu_v)_{v \in V}, \!(\mu_u)_{u \in U})
\end{equation}
where again $\varphi_u \coloneqq \iota_{[0,\infty)}$ for all $u \in U$ 
and $t_v = t_{e}$ if $e$ in $\mathcal E$ joins $v$ with some other node (indeed well-defined for leaves).
Then, we can prove in analogy to Lemma~\ref{lem:bary_lemma} that the optimal plan $\hat \pi$ is related to its marginals $\hat \pi_e$  and $\hat \mu_u \coloneqq P_{\X_u}\strut{}_\# \hat \pi$ by
\[
\sigma_{\hat \pi} = 
\frac{\prod_{e \in \E} \sigma_{\hat \pi_e}}
{\prod_{u \in U} \sigma_{\hat \mu_u}^{\deg(u) - 1}}.
\]
Furthermore, we can show similarly as in the proof of Theorem \ref{thm:bary_mot_relation} the following corollary.
\begin{corollary}
Under the above assumptions, if $\hat\pi$ is the optimal plan in \eqref{eq:mult_bary}, then
the $\hat \mu_u= P_{\X_u}\strut{}_\# \hat \pi$, $u\in U$, solve
\begin{align*}
    \!\!\!\inf_{\mu_u \in \M^+(\X_u)} \!
		\sum_{(j,k) \in \E} \!\!\!t_{(j,k)}\! \UOT_{\eps/t_{(j,k)}}^{(c_{(j,k)}, \varphi_{j},\varphi_{k})}\!(\mu_{j},\mu_{k}) \\
		- \eps \sum_{u \in U} (\deg(u) - 1) E(\mu_u).
\end{align*}
\end{corollary}

\subsection{Efficient Sinkhorn Iterations for Tree-Structured Costs}\label{sec:tree_costs}
%
Throughout this section, let $\X_1,\dotsc,\X_N$ be finite subsets 
of $\R^d$ of size $M_i \coloneqq \lvert \X_i \rvert > 0$, $i = 1,\ldots,N$.
Furthermore, let $\mu_i \in \M^+(\X_i)$ be positive measures that are identified with vectors in $\R_+^{M_i}$. 
Hence, the reference measures $\gamma_i$ are chosen as the counting measure.
Recall that the Sinkhorn mapping for a cost function 
\smash{$c \in \R_{\geq 0}^{M_1 \times \dotsc \times M_N}$} is the concatenation of the $\aprox$ operator and the multi-marginal $(c,\eps)$-transform. 
As the former is applied pointwise, its computational cost is negligible.
Hence, it suffices to discuss the efficient implementation of the multi-marginal $(c,\eps)$-transform. 
For vectors and matrices, we denote pointwise multiplication by $\odot$ and pointwise division by $\oslash$.
Set 
\[
K \coloneqq  \exp(-c / \eps) \in \R_+^{M_1 \times \dotsc \times M_N}.
\]
For efficiency reasons, we perform computations in the $\exp$-domain, i.e., instead of the Sinkhorn iterates $(f^{(n)})_{n \in \N}$
in Algorithm \ref{def:sinkhorn} we consider
\begin{align*}u^{(n)} &\coloneqq u_1^{(n)} \otimes \dotsc \otimes u_N^{(n)}\\
u_i^{(n)} &\coloneqq \exp\Bigl(\frac{f^{(n)}_i}{\eps}\Bigr) , \quad i = 1,\dotsc,N.
\end{align*}
Convergence of Algorithm~\ref{def:sinkhorn} implies convergence of $u^{(n)}$ to some $\hat u \in \R_+^{M_1 \times \dotsc \times M_N}$.
By Proposition \ref{prop:duality_reg_mm_ub_OT}, the optimal plan $\hat{\pi}$ is given by $\hat{\pi} = K \odot \hat u$.
For $n \equiv j \mod N$ the Sinkhorn updates in the $\exp$-domain can be written as
\begin{align}\label{eq:sink_exp_domain}
u_j^{(n)} = &\exp \biggl( -\frac{1}{\eps} \aprox_{\varphi_i^*}^\eps\biggl(-\eps \ln \biggl(\\
&\frac{u_j^{(n-1)} \odot \mu_j}{P_{\X_j} \strut{}_\#(K \odot u^{(n-1)})}\biggr)\biggr) \biggr),
\end{align}
where division has to be understood componentwise.
Note that in this context $P_{\X_j}\strut{}_\#(\cdot)$ corresponds to summing over all but the $j$-th dimension.
Although the involved expression $\exp( -\frac{1}{\eps} \aprox_{\varphi_i^*}^\eps(-\eps \ln(\cdot)))$ appears to be complicated, it simplifies for all the entropies from Example~\ref{ex:2}, see also \cite{CPSV17}.

As recently discussed for the balanced case in \cite{HRCK20},  
multi-marginal Sinkhorn iterations can be computed efficiently 
if the cost function decouples according to a tree, see also  \cite{AB20} for a wider class of cost functions.
In this section, we generalize the approach  for tree-structured costs to the unbalanced setting. 
As in the balanced case, computing the projections $P_{\X_j}\strut{}_\#(K \odot u^{(n)})$, $j = 1,\dotsc,N$, is the computational bottleneck of the Sinkhorn algorithm.
Fortunately, the Sinkhorn iterations reduce to standard matrix-vector multiplications in our particular setting.

\begin{algorithm*}[t]
	\begin{algorithmic}
	    \State \textbf{Input:} Tree $\G = (\V,\E)$ with $N$ nodes, discrete measures $\mu_i$, $i=1,\dotsc,N$, and cost function $c$ decoupling according to $\G$
	    \State Choose a root $v_0$ and initialize $u_{j} = 1$ for all $j \in \mathcal V$
		\State $\V_{\rightarrow} \gets \text{PreOrderDepthFirstSearch}(\E,\V,v_0)$
		\For{$j\in \mathcal V_{\leftarrow} \setminus \{v_0\}$}
			    \State Initialize $\alpha_{(p(j), j)}$ according to \eqref{eq:alpha_updates}
			\EndFor
		\While{Sinkhorn not converged}
			\For{$j \in \V_{\rightarrow}$}
			    \If {$j \neq v_0$}
			        \State Update $\alpha_{(j, p(j))}$ according to \eqref{eq:alpha_updates}
			    \EndIf
			    \State Set $u_j \gets \exp \bigl(-\frac{1}{\eps}\aprox_{\varphi_i^*}^\eps \bigl( -\eps\ln\bigl(\mu_j \oslash \bigodot_{l \in \mathcal{N}_j} 
					\alpha_{(j,l)} \bigr)\bigr) \bigr)$
			\EndFor
			\For{$j\in \V_{\leftarrow} \setminus \{v_0\}$}
			    \State Update $\alpha_{(p(j), j)}$ according to \eqref{eq:alpha_updates}
			\EndFor
		\EndWhile
		\State \textbf{Output:} Optimal dual potentials $(u_1,\dotsc,u_N)$ and vectors $(\alpha_{(j,k)})_{(j,k) \in \Bar{\E}}$
		\caption{Sinkhorn Iterations for Tree-Structured Costs \eqref{eq:mult_bary}}
		\label{alg:sinkhorn_for_trees}
	\end{algorithmic}
\end{algorithm*}

Consider a tree $\G = (\V,\E)$ as in Subsection~\ref{subsec:tree-costs} and corresponding cost functions
\begin{equation}\label{eq:cost_decoupling}
    c = \bigoplus_{(j,k) \in \E} c^{(j,k)}, \quad c^{(j,k)} \in \R_{\geq 0}^{M_j \times M_k}.
\end{equation}
Then, it holds $K_{i_1, \dots, i_N} = \prod_{(j, k)\in \E}K^{(j, k)}_{i_j, i_k}$, where
\[
K^{(j,k)} \coloneqq \exp(-c^{(j,k)}/\eps) \in \R_+^{M_j \times M_k}.\]
The next result, c.f.\ \cite[Thm.~3.2]{HRCK20}, is the main ingredient for an efficient implementation of Algorithm~\ref{def:sinkhorn} for solving $\mathrm{UMOT}_{\eps}$ with tree-structured cost functions.

\begin{theorem}\label{thm:marginals_trees}
The projection onto the $j$-th marginal of $K \odot u$ is given by
\[
{P_{\X_j}}\strut{}_\#(K \odot u) = u_j \odot \bigodot_{l \in \mathcal{N}_j} \alpha_{(j,l)}.
\]
Here the $\alpha_{(j,k)}$ are computed recursively for $(j,k) \in \bar \E \coloneqq \{(v,w)| (v,w) \in \mathcal E \text{ or } (w,v) \in \mathcal E \}$ starting from the leaves by
\begin{equation}\label{eq:alpha_updates}
    \alpha_{(j,k)} = K^{(j,k)} \biggl(u_k \odot \bigodot_{l \in \mathcal N_k \setminus \{j\}} \alpha_{(k,l)}\biggr),
\end{equation}
with the usual convention that the empty product is $1$.
\end{theorem}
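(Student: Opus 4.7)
The plan is to prove this via structural induction on subtrees, exploiting the fact that the tensor $K = \bigotimes_{(j,k)\in \E} K^{(j,k)}$ factorizes along the edges of $\G$, so that summing out coordinates can be done subtree by subtree. This is essentially the sum–product (belief propagation) message passing on a tree, and the $\alpha_{(j,k)}$ play the role of messages sent along the directed edge $j \leftarrow k$.

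First I would introduce the following bookkeeping. For each directed edge $(j,k) \in \bar{\E}$, let $T_{(j,k)}$ denote the connected component of the tree $\G$ with the edge $\{j,k\}$ removed that contains $k$, and let $V_{(j,k)}$ and $E_{(j,k)}$ denote its vertex and edge sets. Since $\G$ is a tree, removing $k$ from $T_{(j,k)}$ splits it into $\deg(k)-1$ disjoint subtrees corresponding to the neighbors $l \in \mathcal N_k \setminus \{j\}$, and we have the disjoint partitions $V_{(j,k)} = \{k\} \cup \bigcup_{l \in \mathcal N_k \setminus \{j\}} V_{(k,l)}$ and $E_{(j,k)} = \bigcup_{l \in \mathcal N_k \setminus \{j\}} \bigl( \{(k,l)\} \cup E_{(k,l)} \bigr)$. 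I would then define the "message" $\tilde \alpha_{(j,k)}\colon \X_j \to \R_+$ by the partial sum
\[
\tilde \alpha_{(j,k)}(x_j) \coloneqq \sum_{(x_v)_{v \in V_{(j,k)}}} K^{(j,k)}(x_j,x_k) \prod_{(v,w)\in E_{(j,k)}} K^{(v,w)}(x_v,x_w) \prod_{v \in V_{(j,k)}} u_v(x_v),
\]
which collects all factors of $K \odot u$ that live in the subtree $T_{(j,k)}$ together with the bridging kernel $K^{(j,k)}$.

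The main step is to show $\tilde \alpha_{(j,k)} = \alpha_{(j,k)}$ by induction on $|V_{(j,k)}|$. The base case is when $k$ is a leaf of $\G$ so that $\mathcal N_k\setminus\{j\} = \emptyset$; then the sum collapses to $K^{(j,k)}(u_k)$, matching \eqref{eq:alpha_updates} with the empty-product convention. For the inductive step, I would use the partition above to factor the sum as
\[
\tilde \alpha_{(j,k)}(x_j) = \sum_{x_k \in \X_k} K^{(j,k)}(x_j,x_k)\, u_k(x_k) \prod_{l \in \mathcal N_k \setminus \{j\}} \tilde \alpha_{(k,l)}(x_k),
\]
where each inner factor is exactly the message $\tilde \alpha_{(k,l)}$ corresponding to a strictly smaller subtree. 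Applying the induction hypothesis and rewriting the inner sum as the matrix–vector product $K^{(j,k)} \bigl( u_k \odot \bigodot_{l \in \mathcal N_k \setminus \{j\}} \alpha_{(k,l)} \bigr)$ yields \eqref{eq:alpha_updates}.

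Finally, the same decomposition applied at the root $j$ gives $\V \setminus \{j\} = \bigcup_{l \in \mathcal N_j} V_{(j,l)}$ and $\E = \bigcup_{l \in \mathcal N_j}\bigl(\{(j,l)\}\cup E_{(j,l)}\bigr)$ as disjoint unions. Substituting into $P_{\X_j}\strut{}_\#(K \odot u)(x_j)$ and pulling $u_j(x_j)$ out of the sum then factorizes the remaining sum into independent subtree sums, producing $u_j(x_j) \prod_{l \in \mathcal N_j} \tilde \alpha_{(j,l)}(x_j)$, which equals the claimed expression by the previous step. The main obstacle is purely notational — keeping the disjoint subtree decompositions, the directed messages $\alpha_{(j,k)}$ versus $\alpha_{(k,j)}$, and the empty-product convention cleanly separated — since once the tree is "cut" at a node the factorization $K = \bigotimes_e K^{(e)}$ makes the algebra essentially automatic.
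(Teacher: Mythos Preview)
Your argument is correct: defining the messages $\tilde\alpha_{(j,k)}$ as partial sums over the subtree hanging off the directed edge $(j,k)$, proving $\tilde\alpha_{(j,k)}=\alpha_{(j,k)}$ by induction on the subtree size, and then decomposing the full marginal sum at the root $j$ is exactly the sum--product (belief propagation) argument that underlies this result. The only thing to be slightly careful with is the orientation convention for $K^{(j,k)}$ versus $K^{(k,j)}$ (one is the transpose of the other), but you already flagged this as a notational point.

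There is, however, nothing to compare against in the paper itself: the paper does not give its own proof of this theorem. It is stated with a reference to \cite[Thm.~3.2]{HRCK20} (``c.f.'') and then the text moves directly on to the description of the depth-first traversal and Algorithm~\ref{alg:sinkhorn_for_trees}. So your write-up in fact supplies a proof that the paper omits; it coincides with the standard argument one finds in the cited reference and in the message-passing literature.
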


First, we traverse the tree $\G$ by a pre-order depth-first search with respect to a root $v_0$.
This results in a strict ordering of the nodes, which is encoded in the list $\V_{\rightarrow}$.
Every node $k \in V$ except the root has a unique parent, denoted by $p(k)$.
We  denote by $\V_{\leftarrow}$ the reversed list $\V_{\rightarrow}$.
Note that the order in which we update the vectors $(\alpha_{(j,k)})_{(j,k) \in \Bar{\E}}$ and potentials $(u_j)_{j \in \mathcal V}$ in Algorithm~\ref{alg:sinkhorn_for_trees} fits to the underlying recursion in \eqref{eq:alpha_updates}.
Furthermore, the computational complexity of Algorithm~\ref{alg:sinkhorn_for_trees} is linear in $N$.
More precisely, $2(N-1)$ matrix-vector multiplications are performed to update every $u_j$ once, which is in alignment with the two-marginal case.
In particular, solving $N$ decoupled problems has the same complexity per iteration with the disadvantage that the marginals of the obtained transport plans do not necessarily fit to each other.
Although we discussed Algorithm~\ref{alg:sinkhorn_for_trees} mainly for computing barycenters, it can also be applied without free marginals, see Section~\ref{sec:tracking_transfer} for a numerical example.

\section{Numerical Examples}\label{sec:numerical_examples}
In this section, we present three numerical examples, 
where the first two confirm our theoretical findings from Section~\ref{sec:BaryInt}. 
Part of our Python implementation is built upon the POT toolbox \cite{POT-toolbox}.

From now on, all measures are of the form 
$\sum_{k=1}^m \mu^k \delta_{x_k}$ with support points $x_k\in \mathbb R^d$, $k=1,\ldots,m$.
We always use
the cost functions $c(x, y) = \norm{x-y}^2$ with corresponding cost matrices $c = c(x_j,x_k)_{j,k=1}^m$.
All reference measures are the counting measure, see Remark~\ref{rem:kl}iii), 
i.e., we exclusively deal with entropy regularization. 

\subsection{Barycenters of 1D Gaussians}
We start with computing the barycenter for two simple measures $\mu_1$, $\mu_2$, which are produced by sampling truncated normal distributions 
$\mathcal N(0.2,0.05)$ and $2\mathcal N(0.8,0.08)$ on $[0,1]$ on a uniform grid. 
Clearly, this choice makes an unbalanced approach necessary.
As before, we denote the discrete spaces and those of the
barycenter by $\X_1 = \X_2 = \Y$.
To approximate the marginals, we use the Shannon entropy functions $\varphi = \varphi_1 = \varphi_2$ so that $D_\varphi = \KL$.
First, we solve the barycenter problem \eqref{eq:ub_bary}, which reads for $t_1 = 1 - t$ and $t_2 = t \in (0,1)$ as
\begin{align}\label{2}
     \hat \xi = &\argmin_{\xi}
		 \sum_{i=1}^2 t_i \min_{ \pi^{(i)} }  \langle c, \pi^{(i)} \rangle 
		+ \eps  E(\pi^{(i)} )\\
		&+ t_i\KL \bigl( P_{\X_i}\strut{}_\# \pi^{(i)} , \mu_i  \bigr) \\
		&
		\mathrm{subject \; to} \quad {P_{\Y}}\strut{}_\# \pi^{(1)} = {P_{\Y}}\strut{}_\# \pi^{(2)} = \xi.
    \end{align}
The resulting barycenter for $\eps=0.005$ is computed using the Sinkhorn iterations described in \cite[Sec.~5.2]{CPSV17}
and is shown in Fig.~\ref{fig:umot_vs_uot} for different $t \in (0,1)$ together with the marginals
$\tilde \mu_i \coloneqq P_{\X_i}\strut{}_\# \hat \pi^{(i)}$, $i=1,2$.
 
We compare these barycenters with the marginal $P_{\Y}\strut{}_\# \hat \pi$ of the corresponding optimal plan $\hat \pi$ for the multi-marginal problem
\begin{align}\label{mult}
     \argmin_{\pi \in \mathcal M^+(\X \times \Y)}&
		\langle c_{t}, \pi \rangle + \eps  E(\pi)\\ &+ 
		 \sum_{i=1}^2 t_i\KL \bigl( P_{\X_i}\strut{}_\# \pi , \mu_i\bigr) 
 \end{align}
computed by Algorithm~\ref{alg:sinkhorn_for_trees}.
The resulting marginals are provided in Fig.~\ref{fig:umot_vs_uot}.
As explained in Remark \ref{rem:smear}, the barycenters $\hat \xi$ appear smoothed compared to $P_{\Y}\strut{}_\# \hat \pi$.
As already mentioned, we do not have relations with shortest paths due to the missing metric structure.

\renewcommand\curfolder{img}
\renewcommand\curwidth{0.9\textwidth}
\begin{figure*}[p!]
	\centering
	\includegraphics[width=\curwidth]{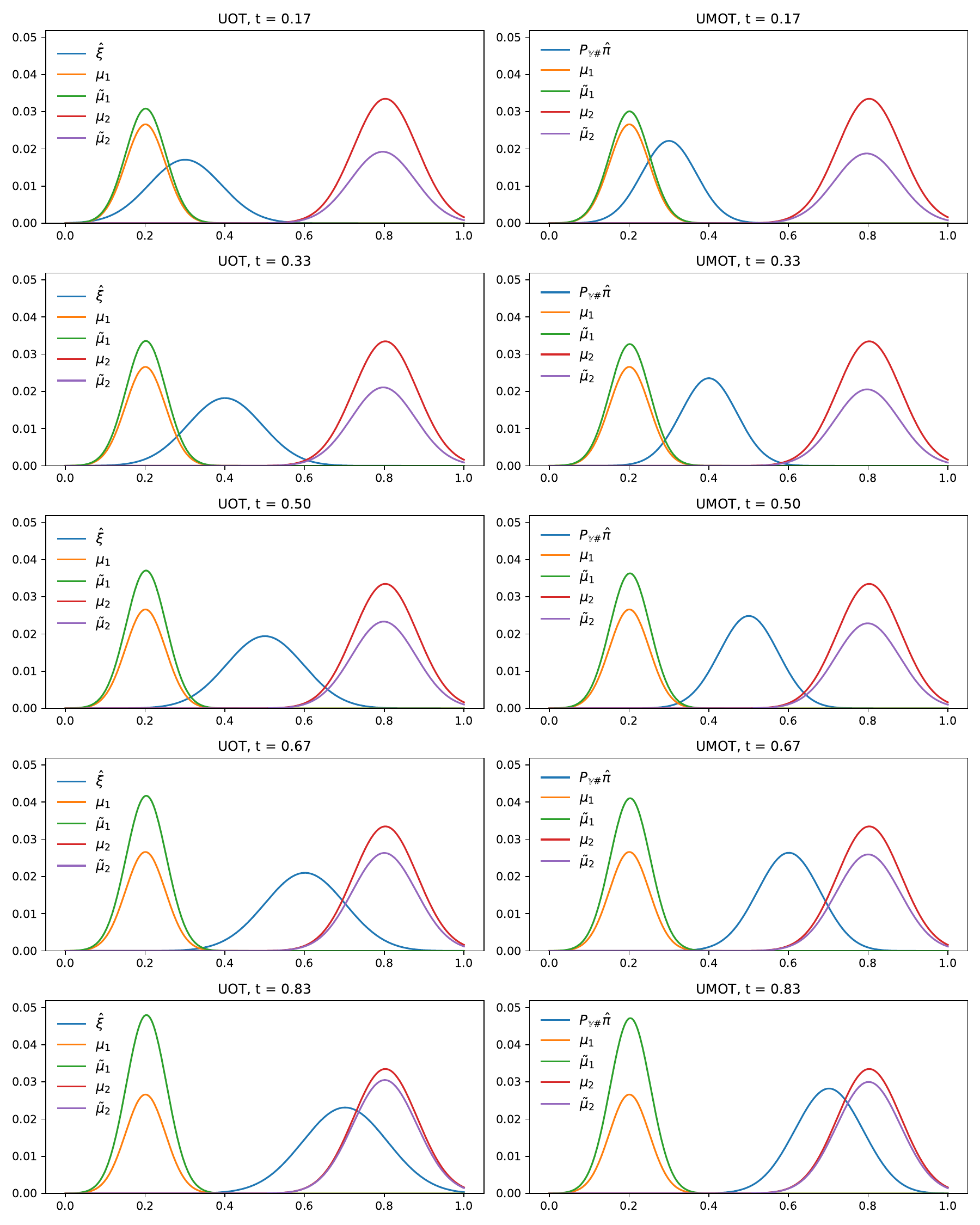}
	\caption{Comparison between the unbalanced barycenter $\hat \xi$ of two Gaussians (left) 
	and the marginal $P_{\Y}\strut{}_\# \hat \pi$ of the optimal transport plan for the corresponding	multi-marginal 
	problems (right) with different $t \in (0,1)$.
	The input measures $\mu_i$, $i=1,2$ and 
	the corresponding marginals $\tilde \mu_i$, $i=1,2$, of the respective transport plans are shown as well.}
	\label{fig:umot_vs_uot}
\end{figure*}

\subsection{H-tree Shaped Cost Functions}\label{sec:h-tree}

Next, we turn our attention to the ``interpolation'' of four gray-value images of size $100 \times 100$
considered as probability measures $\mu_v$, $v = 1,3,5,7$,  
along a tree that is H-shaped, see Fig.~\ref{fig:trees}.
The images are depicted in the four corners of Fig.~\ref{fig:Htree_MOT}.
In this example, the measures corresponding to the inner nodes with $u = 2,4,6$ have to be computed.
For this purpose, we choose $\eps=4\cdot 10^{-4}$ and $D_{\varphi_v}(\cdot, \mu_v)=0.05\KL(\cdot, \mu_v)$. 

\paragraph{Comparison with $\MOT_\eps$}
As the measures have the same mass, we can
compare our proposed $\UMOT_\eps$ approach 
with the balanced $\MOT_\eps$ method.
Equal costs $c$ as well as equal weights are assigned to the edges.
Note that the total cost decouples according to the H-shaped tree.
The obtained results for $\MOT_\eps$ and $\UMOT_\eps$ are depicted 
in Figs.~\ref{fig:Htree_MOT} and \ref{fig:Htree_UMOT}, respectively.
For $\UMOT_\varepsilon$, the corners contain the marginals 
$P_v \hat \pi = P_{\X_v}\strut{}_\# \hat \pi$ instead of the given measures.
As the mass in the different image regions is different, mass is transported between them in the $\MOT_\eps$ interpolation. 
In contrast, only a minimal amount of mass is transported between the images regions for $\UMOT_\eps$, where the mass difference is compensated by only approximately matching the prescribed marginals.
This behavior can be controlled by adjusting the weights in the $\varphi$-divergences.

While $\MOT$ becomes numerically unstable for smaller $\varepsilon$ than $4\cdot 10^{-4}$, the
$\UMOT$ problem remains solvable for smaller values of  $\varepsilon$.
In our numerical experiments, which are not reported here, this led to less blurred images.

\renewcommand\curwidth{0.48\textwidth}
\begin{figure*}[t!]
	\centering
		\subfloat[$\MOT_\eps$: Given measures $\mu_v$, $v=1,3,5,7$ and marginals $P_{\X_u}\strut{}_\#  \hat \pi$, $u=2,4,6$.
	\label{fig:Htree_MOT}]
	{\includegraphics[width=\curwidth]{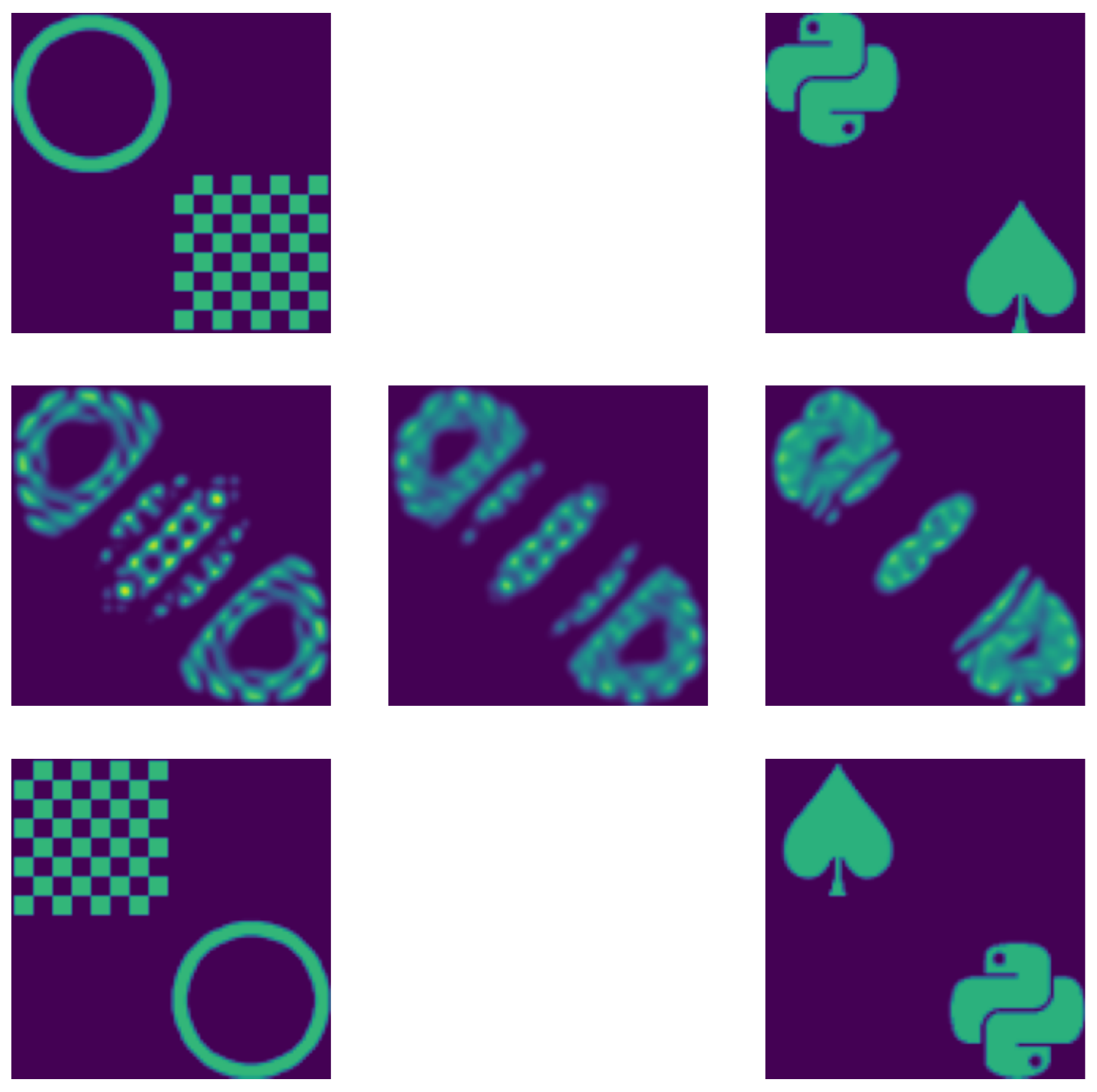}} 
	\hspace{.3cm}
	\subfloat[$\UMOT_\eps$: Marginals $P_{\X_u}\strut{}_\# \hat \pi$, $u=1,\ldots,7$.
	\label{fig:Htree_UMOT}]
	{\includegraphics[width=\curwidth]{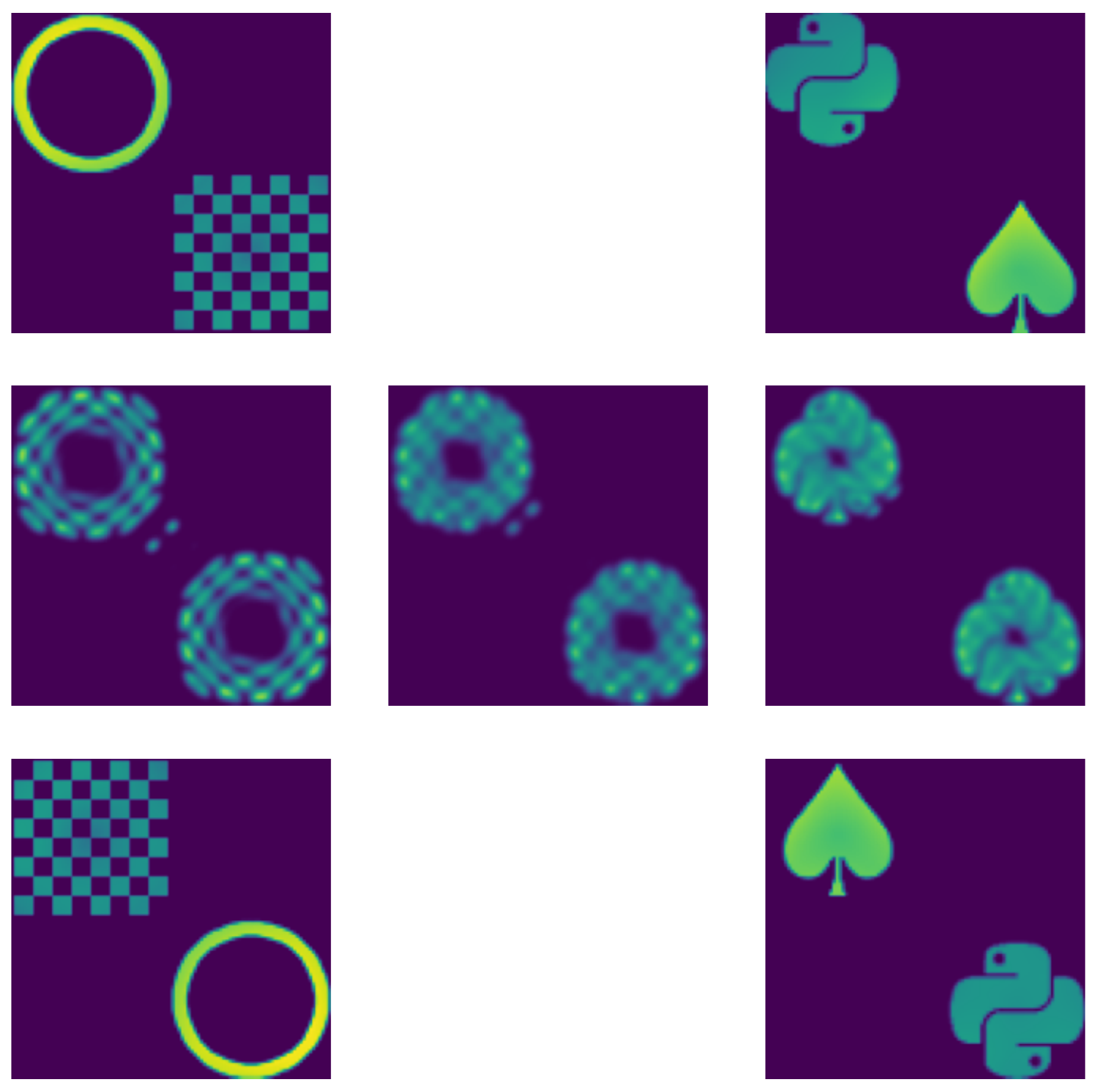}}
	\\[.3cm]
	\subfloat[$\UMOT_\eps$ marginals, multiple star-shaped.\label{fig:Htree_UMOT-star}]{\includegraphics[width=\curwidth]{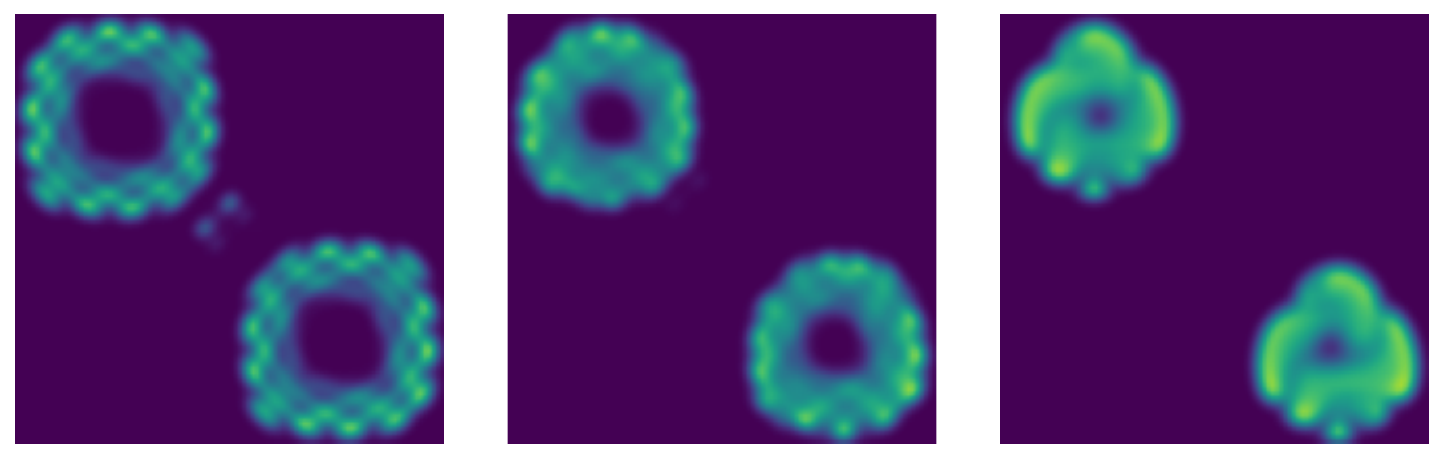}}
	\hspace{.3cm}
	\subfloat[$\UOT_\eps$ barycenters, multiple star-shaped.
	\label{fig:Htree_sumUOT}]
	{\includegraphics[width=\curwidth]{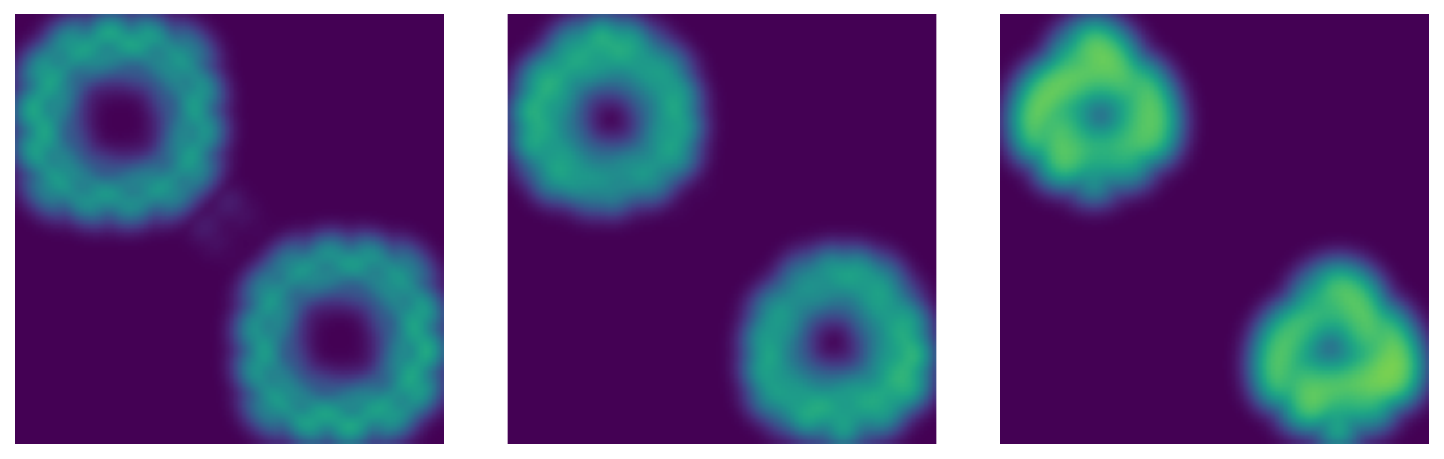}}
	\caption{Different ``interpolations'' between the four given images 
	at the corners of Fig.~\ref{fig:Htree_MOT}. Top:  H-tree structured cost function.
	Bottom: Star-shaped decomposition approach.
	A common color coding is used for all images.}
	\label{fig:Htree}
\end{figure*}

\paragraph{Comparison with multiple star graph barycenters}
Next, we provide a heuristic comparison with an alternative interpolation approach.
Instead of computing three interpolating measures simultaneously, we solve three indidivual barycenter problems based on star-shaped graphs with leaves corresponding to $\mu_v$, $v = 1,3,5,7$.
This is done both with the $\UOT_\eps$ and corresponding $\UMOT_\eps$ approach.
More precisely, we solve \eqref{eq:ub_bary} and \eqref{eq:xxl} three times with weights 
$1/12\cdot(5, 5, 1, 1)$, $1/4\cdot(1, 1, 1, 1)$ and $1/12\cdot(1, 1, 5, 5)$,
respectively.
These weights have been derived from the solution of the balanced H-graph-Problem with Dirac measures at the leaves, which is easy to obtain from a linear system corresponding to the first order optimality conditions.
The results are provided in Figs.~\ref{fig:Htree_UMOT-star} and \ref{fig:Htree_sumUOT}.
Noteworthy, both interpolations have 
an even less pronounced mass transfer between the different image structures.
However, the computed images look considerably smoother than their counterparts in Fig.~\ref{fig:Htree_UMOT}.
Again, as expected, we observe that the $\UOT_\eps$ results in Fig.~\ref{fig:Htree_sumUOT} 
are more blurred than the corresponding $\UMOT_\eps$ interpolations in Fig.~\ref{fig:Htree_UMOT-star}.
As they are all marginals of multi-marginal transport plans, the images in Figs.~\ref{fig:Htree_MOT} and \ref{fig:Htree_UMOT} have the same mass.
In contrast, the images in Figs.~\ref{fig:Htree_UMOT-star} and \ref{fig:Htree_sumUOT} 
do not necessarily have the same mass as they are marginals of different transport plans.
Hence, depending on the application, one or the other approach might be preferable.

Note that in order to compute the multiplications with the matrices $K$ in Algorithm~\ref{alg:sinkhorn_for_trees}, we exploit the fact that the Gaussian kernel convolution $K$ is separable in the two spatial dimensions 
and can be performed over the rows and columns of the images one after another, such that we never actually store the matrix $K\in \R^{10000\times 10000}$.
Consequently, we cannot use stabilizing absorption iterations as proposed in \cite{CPSV17}.

\subsection{Particle Tracking and Transfer Operators}\label{sec:tracking_transfer}

\renewcommand\curfolder{img}
\renewcommand\curwidth{0.9\textwidth}
\begin{figure*}[t!]
	\centering
	\includegraphics[width=\curwidth]{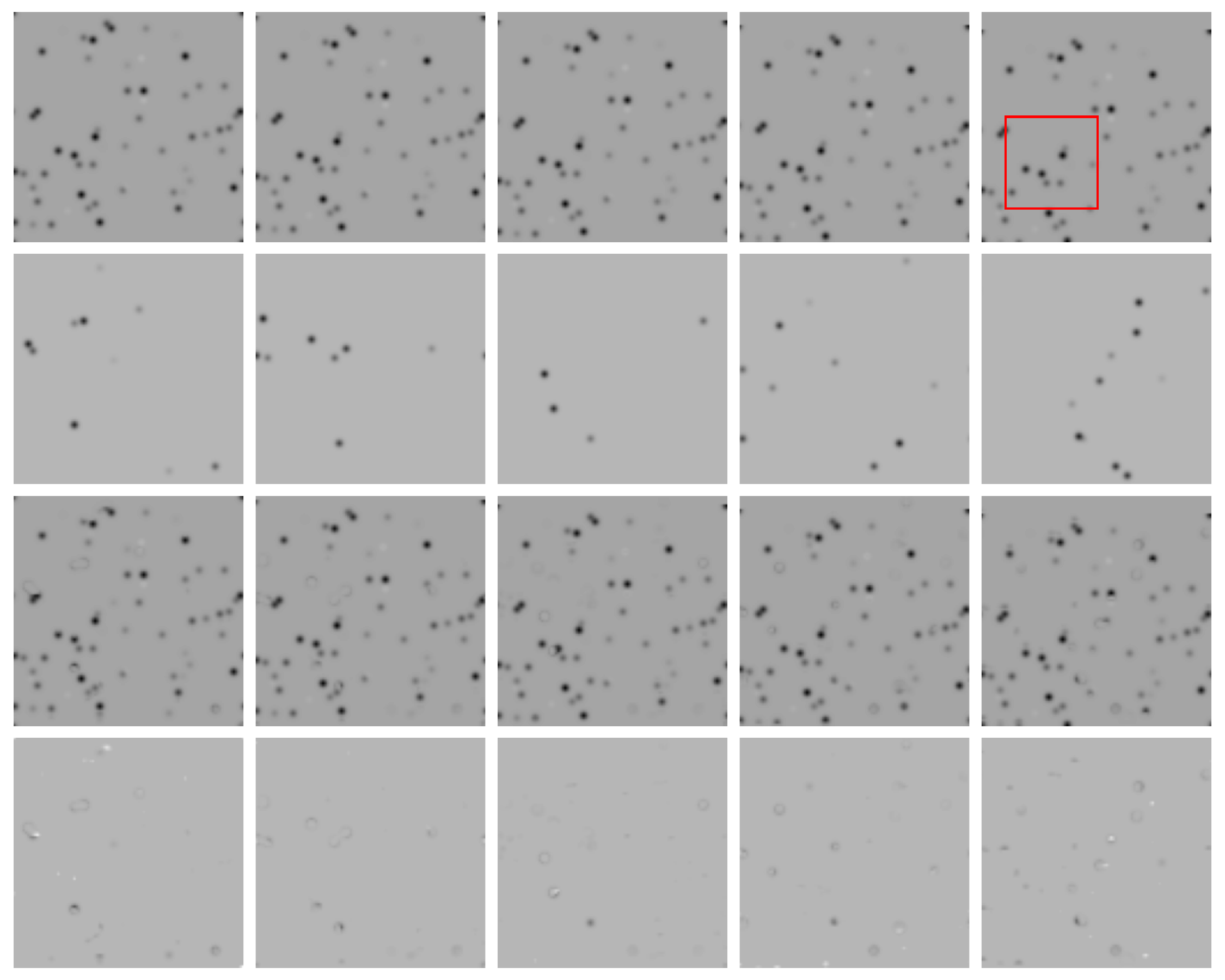}
	\caption{
	Top: Clean data set. 
	Middle upper: Added noise to get the measures $\mu_i$, $i=1,\ldots,5$. 
	Middle lower: Marginals $P_{\X_i}\strut{}_\# \hat \pi$, $i=1,\ldots,5$, of optimal transport plan $\hat \pi$ for $\UMOT_\eps(\mu)$.
	Bottom: Remaining noise and artifacts (marginals -- clean).}
	\label{fig:dots}
 \includegraphics[width=\curwidth]{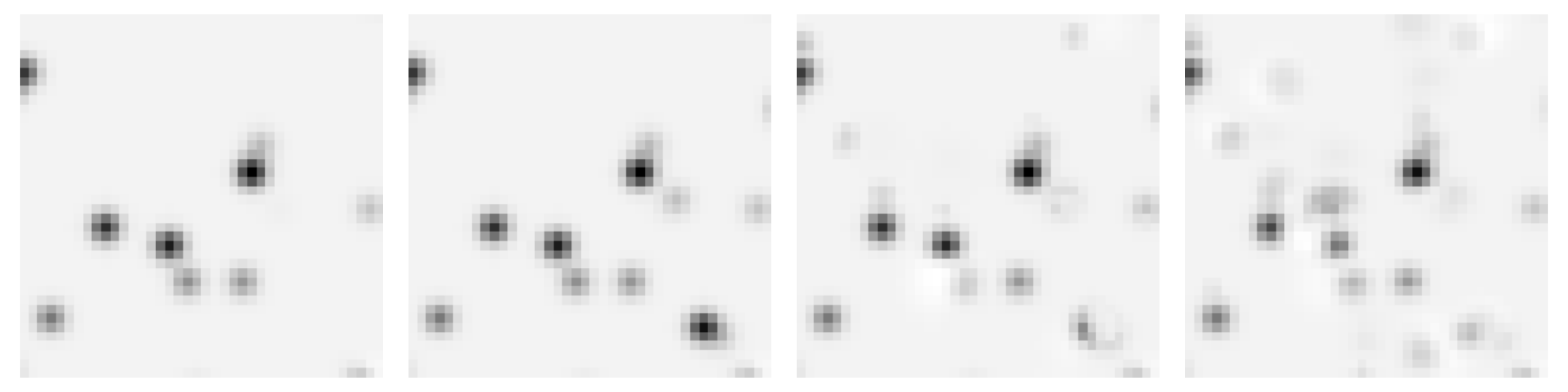}
	\caption{Zoom into marked patch in Fig.~\ref{fig:dots}. Note that the brightness scale differs from Fig.~\ref{fig:dots} for better contrast.
	Left: Ground truth $\mu_{ \mathrm{clean},5 }$. 
	Middle left: Noisy $\mu_{ 5 }$.
	Middle right: Transferred image $K_{\UMOT}^\tT \, \mu_{\mathrm{clean},1}$. 
	Right: Transferred image  $K_{\UOT}^\tT  \, \mu_{\mathrm{clean},1}$.
	}
	\label{fig:dots_propagated}
\end{figure*}

Finally, we investigate whether computing a single joint $\UMOT_\eps$ solution 
can be beneficial compared to computing several $\UOT_\eps$ plans sequentially, e.g., for particle tracking. 
To this end, we create a time-series of five images. Each image has size $100\times 100$ pixels and consists of ``dots'' by sampling uniform noise, 
setting all values above a small threshold to zero, and applying a Gaussian filter with small variance. 
One time step corresponds to shifting the image by two pixels downwards filling with the small constant background value from the top, which results in images $\mu_{\mathrm{clean},i}$, $i=1, \dots,5$.
We modify this time-series of five images by adding dots randomly for every time step in a similar manner.
Consequently, the data consists of a drift component 
and some random dots popping up and disappearing again.
The resulting data $\mu_i$, $i=1, \dots,5$, is shown in Fig.~\ref{fig:dots}.
As we want to extract only the drift component, we apply $\UMOT_\varepsilon(\mu)$ for
a line-tree-structured cost function with the same costs $c_i$ along the path,
regularization parameter $\eps = 10^{-4}$,  
and 
$D_{\varphi_i}(\cdot,\mu_i)=7\cdot 10^{-4} \TV(\cdot, \mu_i)$, $i=1,\ldots,5$.
We expect that the hard thresholding of the corresponding $\aprox$-operators for 
$\TV(\cdot, \mu_i)$, $i=1,\ldots,5$, is particularly well suited for removing the noise in our example, 
see also \cite{CM10,F10}. 
The resulting marginals of the optimal transport plan $P_{\X_i}\strut{}_\# \hat \pi$ are shown in Fig.~\ref{fig:dots}.
Indeed, the method manages to remove most of the noise dots.

\paragraph{Transfer operators.}
For our next comparison, we use OT-related \emph{transfer operators}, which
have recently been discussed in \cite{KLNS20}.
We abuse notation in this section by sometimes identifying empirical measures with their vectors or matrices of weights for simplicity.
In a nutshell, assuming a discrete, two-marginal setting of probability measures
with optimal transport matrix $\hat \pi$ 
and left marginal vector
$\mu_\ell = (\mu_\ell^k)_{k=1}^m$, 
we can define a row-stochastic transition matrix $K$ by setting 
$$
K \coloneqq \diag\left(\mu_\ell^{-1}\right) \hat \pi.
$$ 
This concept allows us to propagate other measures than $\mu_\ell$ forward in time by multiplication with 
$K^\tT$.
Note that there is a continuous analog in terms of \emph{Frobenius--Perron-operators}, \emph{Markov kernels} and the \emph{disintegration theorem}, see \cite{brin2002introduction, boyarsky1997laws, LasotaAndrzej1994CFaN, KNKW18, froyland2013analytic} for details. 

Now, we compute the marginal $\hat \pi_{1,5} \coloneqq P_{\X_1 \times \X_5}\strut{}_\# \hat \pi$ of the optimal $\UMOT_\eps(\mu)$ transport plan $\hat \pi$. 
Using the marginal 
$\tilde \mu_1 \coloneqq P_{\X_1}\strut{}_\# \hat \pi_{1,5} = P_{\X_1}\strut{}_\# \hat \pi$, we get
the transfer operator
$$
K_{\UMOT} = \diag \left(\tilde \mu_1^{-1}\right) \hat \pi_{1,5}.
$$
Then, we propagate the first clean image $\mu_{\mathrm{clean},1}$ by this transfer operator, i.e., we compute
$K_{\UMOT}^\tT \, \mu_{\mathrm{clean},1}$.
The result is shown in  Fig.~\ref{fig:dots_propagated}.

For comparison, we also compute $N-1$ successive 
$\UOT_\eps(\mu_i,\mu_{i+1})$ plans $\hat \pi^{(i)}$, $i=1, \dots, 4$. 
Denoting the marginals by $\tilde \mu_i \coloneqq P_{\X_i}\strut{}_\# \hat \pi^{(i)}$, $i=1, \dots, 4$, we consider the transfer kernel
\begin{equation}
    K_{\UOT} = \prod_{i=1}^{4}  \diag \left(\tilde \mu_i^{-1}\right) \hat \pi^{(i)}.
\end{equation}
Then, we transfer the clean first image by this operator, i.e., we compute $K_{\UOT}^\tT \, \mu_{\mathrm{clean},1}$.
The obtained results are shown in Fig.~\ref{fig:dots_propagated}.
Note that similarly as described in the previous subsection, 
the computations can be carried out using separable convolutions without ever storing the large matrix $K$.

As we wanted to extract the drift behavior using only the noisy images, the propagated images should be compared to $\mu_{\mathrm{clean},5}$, i.e., the last image of the first row in Fig.~\ref{fig:dots}, which is just a shifted version of the first image.
In some sense, we can interpret this image as the propagation using the ``true'' transfer operator.
There are considerably less artifacts visible in the $\UMOT_\eps$  propagated image  compared to the $\UOT_\eps$ version.
This is particularly pronounced in the middle left part of the images.
As an error measure, we also computed the squared Euclidean distances between the propagated images and the ground truth, which are $2.98$ and $6.48$ for the $\UMOT_\eps$ and $\UOT_\eps$ version, respectively.

From an intuitive point of view, the results are not surprising.
If we are only provided with a single pair of images from the sequence, it is much harder to judge which points correspond to noise than for a whole sequence of images.
Note that a single Sinkhorn iteration for the coupled $\UMOT_\eps$ problem has the same computational complexity as for all of the decoupled $\UOT_\eps$ problems together.
Hence, the $\UMOT_\eps$ approach appears to be better suited for this application, as it incorporates more information about the problem without significant additional computational cost.

\section{Conclusions}\label{sec:conc}
In this paper, we introduced a regularized unbalanced multi-marginal optimal transport framework, abbreviated 
$\UMOT_\eps$, bridging the gap between regularized unbalanced optimal transport
and regularized multi-marginal optimal transport.
We outlined how the Sinkhorn algorithm can be adapted to solve $\UMOT_\eps$ efficiently for tree-structured costs.
For this case, we have also shown how $\UMOT_\eps$ provides alternative solutions of barycenter-like problems with desirable properties, such as improved sharpness.
In the future, we plan to examine $\UMOT_\eps$ in connection with particle cluster tracking methods, e.g.,
following the ideas in \cite{KLNS20}.
Furthermore, it would be interesting to examine a reformulation of the regularized unbalanced barycenter problem as UMOT problem using a cost function similar to that 
in Remark \ref {rem:bary-mot}.
Additionally, we want to investigate the theoretical relation between the two interpolation approaches from Sec.~\ref{sec:h-tree}.
Finally, we are interested in $\UMOT_\eps$ for measures having the same moments up to a certain order and for measures living on special manifolds such as, e.g., torus or spheres, see also \cite{EGNS2019}.

\section*{Acknowledgment}
The datasets generated during and analyzed during the current study are not publicly available, but are available from the corresponding author on reasonable request.
Funding by the DFG under Germany's Excellence Strategy – The Berlin Mathematics Research Center MATH+ (EXC-2046/1, Projektnummer: 390685689)
and by the DFG Research Training Group DAEDALUS (RTG 2433)
is acknowledged.

\bibliographystyle{abbrv}
\bibliography{references_clean}
\end{document}